\numberwithin{equation}{section}
\newtheorem{thm}[equation]{Theorem}
\newtheorem{prop}[equation]{Proposition}
\newtheorem{lem}[equation]{Lemma}
\theoremstyle{definition}
\newtheorem{rem}[equation]{Remark}
\newtheorem{example}[equation]{Example}
\newtheorem{ntt}[equation]{}
\newcommand{\Mot}{\mathcal{M}}
\newcommand{\SB}{\mathop{\mathrm{SB}}}
\newcommand{\w}{\omega}
\renewcommand{\Im}{\mathop{\mathrm{Im}}}
\newcommand{\CH}{\mathop{\mathrm{CH}}\nolimits}
\newcommand{\PSL}{\operatorname{\mathrm{PSL}}}
\newcommand{\PGSp}{\operatorname{\mathrm{PGSp}}}
\newcommand{\Spin}{\operatorname{\mathrm{Spin}}}
\newcommand{\PGO}{\operatorname{\mathrm{PGO}}}
\newcommand{\GL}{\operatorname{\mathrm{GL}}}
\newcommand{\SL}{\operatorname{\mathrm{SL}}}
\newcommand{\E}{\mathrm{E}}
\newcommand{\Ch}{\mathop{\mathrm{Ch}}\nolimits}
\newcommand{\res}{\mathop{\mathrm{res}}\nolimits}
\newcommand{\cdim}{\mathrm{cdim}}
\newcommand{\Char}{\mathop{\mathrm{char}}\nolimits}
\newcommand{\zz}{\mathbb{Z}}
\newcommand{\F}{\mathrm{F}}
\newcommand{\G}{\mathrm{G}}
\newcommand{\A}{\mathrm{A}}
\newcommand{\B}{\mathrm{B}}
\newcommand{\qq}{\mathbb{Q}}
\newcommand{\C}{\mathrm{C}}
\newcommand{\Spec}{\operatorname{Spec}}
\newcommand{\End}{\operatorname{End}}
\newcommand{\Hom}{\operatorname{Hom}}
\newcommand{\Aut}{\operatorname{Aut}}
\newcommand{\op}{\mathrm{op}}
\newcommand{\pt}{\mathrm{pt}}
\newcommand{\Ker}{\operatorname{Ker}}
\newcommand{\D}{\mathrm{D}}
\newcommand{\trunc}{\mathrm{trunc}}
\renewcommand{\phi}{\varphi}
\title
[Rost motives, affine varieties, and classifying spaces]
{Rost motives, affine varieties, and classifying spaces}
\keywords
{Linear algebraic groups, twisted flag varieties, equivariant Chow groups, classifying spaces, motives.}
\subjclass[2010]{20G15, 14C15}
\author
[Victor Petrov]
{Victor Petrov}
\author
[Nikita Semenov]
{Nikita Semenov}
\address{Semenov:
Institut f\"ur Mathematik, Johannes Gutenberg-Universit\"at
Mainz, Staudingerweg 9, D-55128, Mainz, Germany}
\email{semenov@uni-mainz.de}
\address{Petrov: St.~Petersburg Department of Steklov Mathematical Institute,
Russian Academy of Sciences,
Fontanka 27,
191023 St.~Petersburg,
Russia}
\email{victorapetrov@googlemail.com}
\thanks{Results of Sections 6--8 were obtained under the support of RSCF grant 14-11-00456. The second author
acknowledges the support of the SPP 1786 ``Homotopy theory and algebraic geometry'' (DFG)}
\begin{document}

\begin{abstract}
In the present article we investigate ordinary and equivariant Rost motives. We provide an equivariant motivic
decomposition of the variety $X$ of full flags of a split semisimple algebraic group over
a smooth base scheme,
study torsion subgroup of the Chow group of twisted forms of $X$, define some equivariant Rost motives over a field and ordinary Rost motives
over a general base scheme, and relate equivariant Rost motives with classifying spaces of some algebraic groups.
\end{abstract}

\maketitle

\section{Introduction}

The (generalized) Rost motives play an essential role in the proofs of the Milnor and of the Bloch--Kato conjectures
(see \cite{Ro07}, \cite{Vo03}, \cite{Vo11} and references there). Rost motives are uniquely determined by pure symbols in the Galois cohomology
$H_{et}^n(F,\mu_p^{\otimes n})$ of a field $F$, where $n$ is a natural number
and $p$ is a prime number. In particular, a Rost motive depends on two parameters $n$ and $p$. 

In Section~\ref{secaff} we will show that the Rost motives corresponding to {\it small values} of parameters $(n,p)$ can be
identified in the category of motives of Voevodsky with the motives of certain concrete affine varieties.
These identifications resemble the exceptional isomorphisms between small finite groups.

Namely, there exists a general principle in Mathematics related to the phenomenon of small dimensions.
For example, it is well-known that in dimensions bigger than $4$ there are no regular polytopes apart
from simplices, hyper-cubes and hyper-octahedra (which are dual to hyper-cubes). On the other hand, there exist famous beautiful $3$- and $4$-dimensional
regular polytopes, namely, dodecahedron, icosahedron, $24$-cell, $120$-cell, and $600$-cell.
                                        
The same principle is known in other areas of Mathematics. Here is a short outline.

$\bullet$ When studying finite simple groups, one finds a finite number of exceptional groups, which are called
{\it sporadic groups}. There are no sporadic groups of order bigger than $2^{180}$. Thus, a ``small dimension'' in
this example is big, but concrete and finite.

$\bullet$ In the theory of simple {\it Lie groups} and {\it Lie algebras} there are exceptional series of groups and algebras
with the last one of rank $8$ (type $\E_8$). If one would classify simple complex Lie algebras
of rank bigger than $8$, then one would miss all exceptional cases and find the algebras of classical types only.

$\bullet$ In the theory of {\it Jordan algebras} all simple Jordan algebras of dimension bigger than $27$ are special,
and there exists an exceptional Jordan algebra of dimension $27$ (Albert algebra).

$\bullet$ Finally, when studying finite groups, one finds {\it exceptional isomorphisms} between finite groups of small order,
like famous isomorphisms between small $\PSL$ groups over finite fields and alternating groups,
or $\PSL(3,4)$ and Mathieu group $M_{21}$. Moreover, as a consequence of these isomorphisms one finds exceptional
behaviour of the {\it Schur multipliers} of finite groups of small order.

\medskip

One of the goals of the present article is to give another illustration of this principle in the context
of the algebraic geometry.

The amazing identification of the Rost motive with affine varieties mentioned above allows us to define the {\it equivariant} Rost motives for small parameters $(n,p)$.
We start first developing general equivariant methods in Section~\ref{seclift}, where we provide complete $G$-equivariant motivic decompositions of
the variety of full flags of $G$, where $G$ is a split semisimple algebraic group over a field (Theorem~\ref{tlift}).
These motivic decompositions turn out to be related to Kac's degrees of generators of a certain ideal $I_p$ from \cite{Kac85}.
Thus, we give a geometric interpretation of these degrees, as opposed to an algebraic description using combinatorics
of the Weyl group given in \cite{Kac85}.

Applying our general formula of Theorem~\ref{tlift} to varieties of types $\G_2$ and $\F_4$ (where the ordinary Rost
motives appear) and using the identification of small Rost motives with affine varieties we find a relation
between the {\it equivariant Rost motives} and the {\it classifying spaces} of some
algebraic groups. For example, the $\G_2$-equivariant Rost motive for $(n,p)=(3,2)$ is $B\SL_3$ and
the $\F_4$-equivariant Rost motive for $(n,p)=(3,3)$ is $B\Spin_9$. We explain this in Section~\ref{sec6}.
We also remark that recently Pirutka and Yagita used the classifying spaces of some simple simply connected
algebraic groups to provide a counter-example to the integral Tate conjecture over finite fields (see \cite{PY14}).

Moreover, a more detailed analysis shows that there is a close relation between the {\it torsion subgroup}
of the ordinary Rost motive and Kac's degrees of the ideal $I_p$. We establish this relation in general
(not only for types $\G_2$ and $\F_4$) in Section~\ref{sec7}. In particular, it follows from our results that
the Chow group $\CH(E/B)/p$ is {\it finitely generated}, where $E$ is a generic $G$-torsor over a field, $B$ is a Borel subgroup of $G$
and $p$ is a prime number.
Notice that this fact is quite unexpected, since e.g. there exist examples of small-dimensional quadrics due
to Karpenko and Merkurjev with {\it infinitely generated} torsion in the Chow group.
We remark also that the original title of Voevodsky's article \cite{Vo03} (where he proves the Milnor
conjecture) was ``On $2$-torsion in motivic cohomology'', since precisely the structure of the torsion part
was crucial in his proof.

Besides, we provide in Section~\ref{sec7} a motivic decomposition of $E/B$, where $B$ is a Borel subgroup
of a split semisimple algebraic group $G$ over a field $F$ and $E$ is a $G$-torsor over an arbitrary
smooth base scheme. We remark that we are not aware of any other similar result, where the base scheme is not field.

Finally, the identification of small Rost motives with affine varieties allows to define ordinary small {\it Rost motives
over an arbitrary smooth base scheme}. We do it in Section~\ref{rostbase}, where we also show that the Rost motives
over an arbitrary scheme have in general more torsion than over fields.

Besides this, we remark that our equivariant motivic decompositions from Section~\ref{seclift} automatically
provide decompositions of nil-Hecke algebras modulo a prime $p$ and give new information about its modular
representations (see \cite{NPSZ15}).

{\bf Acknowledgments.}
We would like to thank Nikolai Vavilov for his very interesting lectures on exceptional objects in
algebra and geometry given in the Chebyshev Laboratory of the St. Petersburg State University in 2013, which
influenced the present paper. The second author also thanks Thomas Friedrich for his talk about
reductive spaces in small dimensions given at the Mainz University in January 2014 where he explained
some exceptional isomorphisms in the context of differential geometry.

\section{Definitions and notation}
\begin{ntt}[Algebraic groups]
Let $F$ be an arbitrary field. In the present article we consider linear algebraic groups over $F$.
The basic notion of the theory of linear algebraic groups
(e.g. Weyl group, Borel subgroups, fundamental weights, etc.) can be found in many books, e.g. in \cite{Springer}.

In the article we freely use the notion of twisted flag varieties, torsors, Galois cohomology, Milnor $K$-theory, as
well as some classical constructions of algebraic groups related to Pfister forms, octonions and Albert algebras,
see \cite{Inv}, \cite{GMS03}.

Moreover, we associate with a semisimple group $G$ of inner type a set of prime numbers, called {\it torsion primes}.
Namely, we define this set as the union of all torsion primes of all simple components of $G$ and for a simple $G$
of inner type this set is given in the following table:
\begin{center}
\begin{longtable}{c|c}
Dynkin type & Torsion primes\\
\hline
$\A_m$ & $p\mid m+1$\\
$\B_m,\C_m,\D_m,\G_2$ & $2$\\
$\F_4,\E_6,\E_7$ & $2,3$\\
$\E_8$ & $2,3,5$
\end{longtable}
\end{center}
\end{ntt}

\begin{ntt}[Motives]
For a smooth variety $X$ over $F$ we consider its Chow ring $\CH(X)$ of algebraic cycles modulo rational equivalence,
see \cite{Ful}. Sometimes we have a fixed prime $p$ and use the notation $\Ch(X):=\CH(X)\otimes\zz/p$.
We do not write $p$ in the notation of $\Ch$, since $p$ is always clear from the context.
Notice that $\CH(X)=\CH^*(X)=\CH_*(X)$ is graded, but we often omit the $*$ in the notation meaning the whole
Chow group.

For a field extension $E/F$ we call a cycle $\alpha\in\CH(X_E)$ {\it rational}, if it is defined over $F$,
i.e. lies in the image of the restriction homomorphism $\CH(X)\to\CH(X_E)$.
                                                                           
If $X$ is a flag variety for a split group $G$, then $\CH(X)$ is combinatorial and depends only
on the Dynkin type of $G$ and on the (combinatorial) type of $X$. Therefore $\CH(X)$ does not change under
field extensions and we identify $\CH(X)$ and $\CH(X_E)$ for all $E/F$.

We work in the category of Grothendieck's Chow motives over $F$ with coefficients in $\zz$ or in $\zz/p$ for some
prime number $p$. The construction of this category in given \cite{Ma68} (see also the book \cite{EKM}).
Namely, one starts with the category of correspondences. Its objects are smooth projective varieties over $F$
and the morphisms between two such varieties $X$ and $Y$ are given by
$$\Hom(X,Y)=\bigoplus\CH_{\dim X_i}(X_i\times Y)\otimes R,$$
where $R$ is the coefficient ring and the sum runs over all irreducible components $X_i$ of $X$.
The category of the Chow motives arises as the pseudo-abelian completion of the category of correspondences.
In particular, its objects are pairs $(X,\pi)$ where $X$ is a smooth projective variety over $F$ and $\pi\in\Hom(X,X)$
is a projector, i.e., it satisfies the equation $\pi\circ\pi=\pi$.

Sometimes, we work in a larger category $DM^{eff}_-(F)$ of effective motives of Voevodsky over $F$.
The construction of this category can be found e.g. in \cite{FSV00}.

By $\Mot(X)$ we denote the motive of $X$. For a motive $M$ we denote by $$M\{m\}=M(m)[2m]$$ its {\it Tate twist} by $m$.
We say that the {\it Rost nilpotence} principle holds for $X$, if the kernel of the restriction homomorphism
$$\End(\Mot(X))\to\End(\Mot(X_E))$$ consists of nilpotent elements for all field extensions $E/F$. Notice that the Rost
nilpotence principle holds for all twisted flag varieties, see \cite{CGM05}.

Starting from Section~\ref{sec7} we deal with Chow motives over a smooth base scheme $Z$. The construction
of this category of motives is exactly the same as for Chow motives over a field, but one
starts with smooth projective varieties over $Z$ and takes the products over $Z$ and not over $F$.
\end{ntt}

\begin{ntt}[Equivariant Chow groups]
In the present article we consider the equivariant Chow rings $\CH_G$ of algebraic varieties, see \cite{EG98}.
Moreover, we use the Chow rings of the \'etale classifying spaces of algebraic groups, see \cite{To99}. Notice that
there exists a more general context of Morel--Voevodsky (see \cite{MV99}).

The basic property of the equivariant Chow rings which we use in the present article is the following one.
If $E$ is a $G$-torsor and $H$ is a subgroup of $G$, then one has a sequence
$$H\to E\to E/H\to BH,$$
where $BH$ is the classifying space of $H$, which induces a commutative diagram of pullbacks
\begin{equation}\label{dia11}
 \xymatrix{
 \CH_G(G/H)=\CH_H(\pt)=\CH(BH) \ar@{->}[r] \ar@{->}[rd]  & \CH(E/H) \ar@{->}[d]^{\res}    \\
 & \CH(G/H)
 }
\end{equation}

A wide literature is devoted to the computations of Chow rings of classifying spaces
for different groups. For example, there are results due to Field, Guillot, Molina, Pandharipande, Totaro, Vezzosi, Vistoli, Yagita,
and others, see e.g. \cite{Gui07} and references there.

There exists the category of {\it $G$-equivariant} Chow motive, which can be defined exactly in the same way, as the
category of {\it ordinary} Chow motives with $\CH$ replaced by $\CH_G$.

We say that a motive $M$ over $F$ in the category of (equivariant or ordinary) Chow motives is {\it geometrically split},
if there exists a field extension $E/F$ such that $M_E$ is isomorphic to a (finite) direct sum of Tate motives.
The field $E$ is called a {\it splitting field} of $M$.

For a geometrically split motive $M$ we can define a {\it Poincar\'e series} as
$$P(M,t):=P(M_E,t):=P(A(M_E),t)=\sum_{i\ge 0}\dim A^i(M_E)\cdot t^i\in\zz[[t]],$$
where $A$ is either $\CH$ or $\CH_G$.
Notice that for the ordinary Chow groups $P(M,t)$ is a polynomial, but for the equivariant Chow groups it can
be an infinite power series (see Theorem~\ref{tlift}). The Poincar\'e series does not depend on the choice of a splitting field $E$.
\end{ntt}

\begin{ntt}[Ordinary Rost motives]
One of the most amazing classes of Chow motives is the class of Rost motives.
We define them now. Let $F$ be a field, $n$ be a natural number and $p$ be a prime number with $\mathrm{char}\,F\ne p$.
Consider a non-zero element $$u\in K_n^M(F)/p\simeq H^n(F,\mu_p^{\otimes n}).$$

An indecomposable motive $R=R_{n,p}$ is called the Rost motive for $u$, if for all field extensions $E/F$
$u_E=0$ if and only if $R_E\otimes\zz/p$ is decomposable, in which case we require that it is isomorphic to a direct sum of Tate motives over $E$.

If $u$ is a pure symbol, then the Rost motives are constructed by Rost and Voevodsky. For small values of $n$
and $p$ they are closely related to Pfister quadrics, Severi--Brauer varieties,
Albert algebras and exceptional algebraic groups (see Section~\ref{secaff}).
The Poincar\'e polynomial of a Rost motive corresponding to a pure symbol $u$ equals
$$P(R_{n,p},t)=\frac{1-t^{p\frac{p^{n-1}-1}{p-1}}}{1-t^{\frac{p^{n-1}-1}{p-1}}}=\frac{1-t^{pd}}{1-t^d},$$
if we denote $d:=\frac{p^{n-1}-1}{p-1}$.
\end{ntt}

\section{Generic torsors}

Let $G$ be a linear algebraic group over a field $F$. We can embed $G$ into $\GL_n$ for some $n$ as
a closed subgroup. Then the canonical morphism $\GL_n\to\GL_n/G$ is a $G$-torsor, which we call
a {\it standard classifying torsor}. The generic fiber of this torsor is a $G$-torsor over $F(\GL_n/G)$,
which we call a {\it standard generic torsor} (see \cite[Ch.~1, \S5]{GMS03}).

\begin{lem}\label{lgen}
Let $G$ be a linear algebraic group over a field $F$ and let $E$ be a standard generic $G$-torsor over $F$.
Then $\CH^*(E)=\zz$.
\end{lem}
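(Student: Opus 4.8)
The plan is to reduce the computation of $\CH^*(E)$ for the standard generic torsor to a localization argument on the standard classifying torsor $\GL_n \to \GL_n/G$, using that $\GL_n$ has trivial Chow groups and that $E$ is its generic fiber. First I would recall that $\CH^*(\GL_n) = \zz$: this is classical, following e.g. from the fact that $\GL_n$ is an open subscheme of affine space $\mathbb{A}^{n^2}$ whose complement (the determinant hypersurface) has a cellular-type filtration, or from the computation of Chow rings of split reductive groups. Next, I would use the localization sequence: the generic point of $\GL_n/G$ is $\Spec F(\GL_n/G)$, and $E$ is the fiber of $\GL_n$ over this generic point, i.e. $E = \GL_n \times_{\GL_n/G} \Spec F(\GL_n/G)$. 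Writing $\GL_n/G = \varprojlim U$ over the filtered system of nonempty open subschemes $U \subseteq \GL_n/G$, and correspondingly $E = \varprojlim (\GL_n \times_{\GL_n/G} U)$, continuity of Chow groups under such limits of affine transition maps gives $\CH^*(E) = \varinjlim_U \CH^*(\GL_n|_U)$.

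The key step is then that for each such open $U$, the preimage $\GL_n|_U$ is an open subscheme of $\GL_n$, hence $\CH^*(\GL_n|_U)$ is a quotient of $\CH^*(\GL_n) = \zz$ by the localization exact sequence
\begin{equation*}
\CH^*(\GL_n \setminus \GL_n|_U) \to \CH^*(\GL_n) \to \CH^*(\GL_n|_U) \to 0.
\end{equation*}
In particular $\CH^*(\GL_n|_U) = \zz$ in degree $0$ (generated by the fundamental class) and is a quotient of $0$ in positive degrees; so each term of the colimit already equals $\zz$ concentrated in degree $0$. Passing to the colimit over $U$ preserves this, giving $\CH^*(E) = \zz$. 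One must check the degree-$0$ part does not collapse, but $E$ is a nonempty integral scheme, so $\CH^0(E) = \zz$; and the restriction $\CH^0(\GL_n) \to \CH^0(E)$ is the identity on $\zz$, so nothing is lost.

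The main obstacle, and the point requiring care, is the continuity statement $\CH^*(\varprojlim_U \GL_n|_U) = \varinjlim_U \CH^*(\GL_n|_U)$: Chow groups do commute with filtered limits of schemes with affine flat transition morphisms (this is standard, see e.g. the limit arguments in \cite{Ful}, using that cycles and rational equivalences are finitely presented data defined over some finite stage), and the transition maps here are open immersions, so flatness is immediate. The only subtlety is that $\GL_n/G$ need not be affine, but this does not matter: we only pass to the limit over its open subschemes, and the scheme $E$ itself, as the fiber over the generic point, is recovered as that limit regardless. An alternative, slightly slicker route avoiding explicit limits: observe directly that any class in $\CH^k(E)$ with $k > 0$ is supported on a proper closed subset, spreads out to a class on $\GL_n|_U$ for some $U$ supported on a proper closed subset there, hence comes from $\CH^k(\GL_n) = 0$ via the localization sequence above, so is already zero on $\GL_n|_U$ and a fortiori on $E$; and $\CH^0(E) = \zz$ since $E$ is integral. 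This is the argument I would write up, as it sidesteps the formalities of the limit while using the same geometric input: triviality of $\CH^{>0}(\GL_n)$ together with the localization sequence.
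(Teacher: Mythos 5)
Your proposal is correct and follows essentially the same route as the paper: triviality of $\CH^*(\GL_n)$ (via openness in $\mathbb{A}^{n^2}$) combined with the localization sequence, applied to the generic fiber of the classifying torsor $\GL_n\to\GL_n/G$. You merely make explicit the spreading-out/limit step over open subsets of $\GL_n/G$, which the paper compresses into the phrase that $E$ is (pro-)open in $\GL_n$, so no substantive difference.
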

\begin{proof}
We have the fiber square
\[
 \xymatrix{
 E \ar@{->}[r]^{g} \ar@{->}[d]  & \GL_n \ar@{->}[d]    \\
 \Spec{F(\GL_n/G)} \ar@{->}[r] & \GL_n/G
 }
 \]

By the localization sequence the pullback to the generic fiber $g^*\colon\CH^*(\GL_n)\to\CH^*(E)$
is surjective, since $E$ is open in $\GL_n$.
But $\GL_n$ is an open subvariety of $\mathbb{A}^{n^2}$, and $\CH^*(\mathbb{A}^{n^2})=\zz$. Thus, $\CH^*(E)=\CH^*(\GL_n)=\zz$.
\end{proof}

\section{Rost motives and affine varieties}\label{secaff}

In the present section we abbreviate the Rost motives corresponding to elements in $H^n(F,\mu_p^{\otimes n})$ by $R_{n,p}$.

Assume that $\Char F\ne 2$. Let $a_1,\ldots,a_n\in F^\times$ and let
$q=\langle\!\langle a_1,\ldots,a_n\rangle\!\rangle$ be the $n$-fold Pfister form for the symbol
$(a_1)\cup\ldots\cup(a_n)\in H^n(F,\zz/2)$. Let $q'$ be a codimension one subform of $q$ (a maximal Pfister neigbour).
Consider the respective projective quadrics $Q$ and $Q'$ given by $q=0$ and resp. by $q'=0$ and the affine Pfister quadric
$Q\setminus Q'$.

\begin{thm}[$R_{n,2}$]\label{rn2}
In the above notation the motive of the affine Pfister quadric in the category $\mathrm{DM}^{eff}_-(F)$ is isomorphic
to the Rost motive of the symbol $(a_1)\cup\ldots\cup(a_n)\in H^n(F,\zz/2)$.
\end{thm}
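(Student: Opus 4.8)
The plan is to compute the motive of the affine quadric $Q\setminus Q'$ directly and match it with the known Rost motive $R_{n,2}$. First I would recall that, by Rost's computation, the motive of the projective Pfister quadric $Q$ decomposes (integrally) as $\bigoplus_{i=0}^{d}R_{n,2}\{i\}$ where $d=2^{n-1}-1$, and similarly for the maximal neighbour $Q'$ one has $\Mot(Q')=\bigoplus_{i=0}^{d-1}R_{n,2}\{i\}$ together with the extra middle Tate summand that distinguishes $Q'$ from $Q$ (since $\dim q'=\dim q-1$ and $Q'$ is a quadric of odd-dimensional type once $n\ge 1$). The key input is the Gysin/localization triangle in $\mathrm{DM}^{eff}_-(F)$ associated to the closed embedding $Q'\hookrightarrow Q$ with open complement $U:=Q\setminus Q'$: it reads
\[
\Mot(Q')(1)[2]\to\Mot(Q)\to\Mot(U)\to\Mot(Q')(1)[1].
\]
So the strategy is to feed the two decompositions above into this triangle and read off $\Mot(U)$.

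Next I would make the map $\Mot(Q')(1)[2]\to\Mot(Q)$ explicit. The point is that this is (up to sign) the Gysin morphism of the inclusion of a hyperplane section, which on the level of the standard cellular Tate summands sends the class $\{i\}$ to $\{i+1\}$; concretely, the composite of $Q'\hookrightarrow Q$ with a projection identifies the "lower" Tate pieces $\bigoplus_{i=1}^{d}\zz\{i\}$-part of $\Mot(Q')(1)[2]$ isomorphically onto the corresponding pieces of $\Mot(Q)$, shifting the Rost summands $R_{n,2}\{i-1\}\{1\}=R_{n,2}\{i\}$ into $R_{n,2}\{i\}\subset\Mot(Q)$ for $1\le i\le d$. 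Thus the cone kills all of $\Mot(Q)$ except the bottom summand $R_{n,2}\{0\}=R_{n,2}$, while the extra middle Tate summand of $\Mot(Q')$, twisted by $(1)[2]$, lands in degree outside the range where $\Mot(Q)$ has a Tate piece, so it survives shifted by one into the connecting map and cancels against nothing — or, more carefully, I would instead argue that after applying $\Mot(U)=\operatorname{cone}$ and comparing Poincaré polynomials one gets $P(\Mot(U),t)=P(\Mot(Q),t)-t\cdot P(\Mot(Q'),t)$ over a splitting field, which equals $\frac{1-t^{2d+1}}{1-t}-t\cdot\frac{(1-t^{2d+1})(1-t^{d})}{(1-t)(1-t^{d})}$-type bookkeeping collapsing to $1+t^{d}$, exactly $P(R_{n,2},t)$ with $d=2^{n-1}-1$. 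Having pinned the Poincaré polynomial, I would then use the defining property of the Rost motive: $\Mot(U)$ is geometrically a sum of two Tate motives $\zz\oplus\zz\{d\}$, it is a direct summand of a Pfister-quadric motive so Rost nilpotence applies, and it becomes decomposable over $E$ precisely when the Pfister form $q$ splits over $E$, i.e. precisely when the symbol $(a_1)\cup\cdots\cup(a_n)$ vanishes — this is Arason–Pfister / the Milnor conjecture in the relevant range — so by the uniqueness clause in the definition of $R_{n,p}$ we conclude $\Mot(U)\cong R_{n,2}$.

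The main obstacle I anticipate is controlling the Gysin map carefully enough, equivalently showing that $\Mot(U)$ is genuinely \emph{indecomposable} rather than merely having the right Poincaré polynomial and right splitting behaviour: a priori the cone could be $R_{n,2}$ or it could split off an unexpected Tate summand if the Gysin morphism failed to be "diagonal" with respect to the Rost decompositions of $Q$ and $Q'$. I would handle this by invoking that $\Mot(U)$ is a direct summand of $\Mot(Q)$ after a further twist/splitting argument — or more robustly, by showing the upper motive of $U$ coincides with the upper motive of $Q$ (both detect the first Witt index of $q$), which forces the indecomposable piece of $\Mot(U)$ containing the unit to be $R_{n,2}$, and then a dimension/Poincaré-polynomial count rules out any complementary summand. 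A secondary technical point is that we are working in $\mathrm{DM}^{eff}_-(F)$ rather than in pure Chow motives, so I would either cite that Rost's decomposition of the Pfister quadric lifts to $\mathrm{DM}$ (it does, by Voevodsky), or phrase the localization triangle and the comparison entirely inside $\mathrm{DM}^{eff}_-(F)$, where the Gysin triangle for a smooth closed pair is available by construction.
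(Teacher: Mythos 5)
Your overall strategy --- plug Rost's decompositions of $Q$ and $Q'$ into a localization triangle and identify the cone --- is the same as the paper's, but as written it has one misstatement and one genuine gap, and the gap sits exactly where the paper does its real work. First, the triangle you start from does not exist in that form. The Gysin triangle reads $\Mot(U)\to\Mot(Q)\to\Mot(Q')(1)[2]\to\Mot(U)[1]$, so $\Mot(U)$ is a cocone there, not a cone; the triangle in which $\Mot(Q')$ maps \emph{into} $\Mot(Q)$ is the localization triangle for motives with compact support, $\Mot(Q')\xrightarrow{f_*}\Mot(Q)\to\Mot(Q\setminus Q')\to\Mot(Q')[1]$ (this is what the paper uses, via \cite[Ch.~5, Prop.~3.5.4]{FSV00}), and it carries \emph{no} Tate twist on $\Mot(Q')$: the pushforward $f_*$ already raises codimension by one. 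Your hybrid $\Mot(Q')(1)[2]\to\Mot(Q)\to\Mot(U)\to\Mot(Q')(1)[1]$ is not a distinguished triangle (and is not even internally consistent in the shifts), so the Poincar\'e-polynomial bookkeeping has to be rerun with the correct triangle --- where it does give $1+t^{2^{n-1}-1}$ over a splitting field, precisely because $f_*$ shifts codimension by one.

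The serious problem is the step you yourself flag as the main obstacle: why the cone is $R_{n,2}$ over $F$. Knowing the splitting pattern $\zz\oplus\zz\{2^{n-1}-1\}$ over a splitting field and arguing ``it decomposes iff the symbol vanishes, hence it is $R_{n,2}$ by uniqueness'' does not close this: a priori the cone of a morphism of Chow motives taken in $\mathrm{DM}^{eff}_-(F)$ need not be a Chow motive at all, so neither indecomposability over $F$ nor any uniqueness statement for Rost motives can be applied to it; your assertion that it ``is a direct summand of a Pfister-quadric motive'' is exactly what has to be proved, and ``the upper motive of $U$'' is not available since $U$ is affine rather than projective homogeneous. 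What is needed is that $f_*$ is split injective over the base field, so that the connecting morphism vanishes, the cone is the complementary direct summand of $\Mot(Q)$, and that summand is $R_{n,2}$ by the two decompositions. The paper obtains this splitting by an explicit correspondence argument: it introduces the shift morphism $s$ sending $R_{n,2}\{i\}$ to $R_{n,2}\{i-1\}$, checks that $f^*\circ s\circ f_*$ is an isomorphism over a splitting field, and invokes Rost nilpotence for quadrics (\cite{Ro98}, \cite{CGM05}, cf.\ \cite[Thm.~1.2]{ViZ08}) to descend the splitting to $F$; only then does uniqueness of the cone give $\Mot(Q\setminus Q')\simeq R_{n,2}$. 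Without an argument of this kind your proposal has a genuine gap at its central step.
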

\begin{proof}
Denote by $f\colon Q'\to Q$ the respective embedding of quadrics.
By \cite[Ch.~5, Prop.~3.5.4]{FSV00} there is an exact triangle in $\mathrm{DM}^{eff}_-(F)$ of the form
$$\Mot(Q')\xrightarrow{f_*}\Mot(Q)\to\Mot(Q\setminus Q')\to\Mot(Q')[1].$$

The varieties $Q$ and $Q'$ are projective homogeneous generically split varieties. Their motivic decompositions look as
follows (see \cite{Ro98}, \cite[Section~7]{PSZ08}):

$$\Mot(Q)=\bigoplus_{i=0}^{2^{n-1}} R_{n,2}\{i\},$$
$$\Mot(Q')=\bigoplus_{i=0}^{2^{n-1}-1} R_{n,2}\{i\}.$$

We claim that the map $f_*\colon\Mot(Q')\to\Mot(Q)$ has a left inverse. Indeed, consider the composition of morphisms
$\Mot(Q')\xrightarrow{f_*}\Mot(Q)\xrightarrow{s}\Mot(Q)\xrightarrow{f^*}\Mot(Q')$, where $s$ is the shifting morphism,
which maps $R_{n,2}\{i\}$ to $R_{n,2}\{i-1\}$ for all $i=1,\ldots,2^{n-1}$. Notice that $s\in\CH_{\dim Q+1}(Q\times Q)$,
$f^*\in\CH_{\dim Q-1}(Q\times Q')$ and $f^*\circ s\in\CH_{\dim Q}(Q\times Q')$.

Over a splitting field of $Q$ and $Q'$ this composite morphism is an isomorphism. Since the Rost nilpotence
principle holds for $Q$ and $Q'$ (see \cite[Prop.~9]{Ro98}, \cite[Section~8]{CGM05}), this implies that $f_*$
has a left inverse over the base field $F$ (cf. \cite[Thm.~1.2]{ViZ08}).

Thus, we get another exact triangle, namely,
$$\Mot(Q')\xrightarrow{f_*}\Mot(Q)\to R_{n,2}\xrightarrow{0}\Mot(Q')[1].$$

Since the cone in an exact triangle is unique up to isomorphism, it follows that $\Mot(Q\setminus Q')\simeq R_{n,2}$.
\end{proof}
                                                                                  
Assume now that $\Char F\ne 2,3$.
An {\it Albert algebra} over $F$ is a $27$-dimensional simple exceptional Jordan algebra over $F$ (see e.g. \cite[Ch.~IX]{Inv}).
The variety $Y$ of $1$-dimensional subspaces $Fx$ of $J$ such that $$x^2=0$$ is a projective $15$-dimensional homogeneous
variety of type $\F_4$ under the group $\Aut(J)$. This variety is a twisted form of $\F_4/P_4$ of maximal parabolic subgroups
of type $4$ (the enumeration of simple roots follows Bourbaki); see \cite[27.2]{Fre59}, \cite[10.13]{Ti74}.

An element $x\in J$ is called {\it totally singular}, if $$x\times x=0$$ for the Freudenthal $\times$-product on $J$.
An element $x$ is totally singular, if and only if $x^2=0$ or $x$ is a non-zero primitive idempotent (\cite[p.~364]{Jac68}).

Consider now the variety $X$ of $1$-dimensional totally singular subspaces of $J$. This is a projective
$16$-dimensional variety of inner type $\E_6$ under the group of isometries of the cubic norm of $J$. This variety
is also a twisted form of the variety $\E_6/P_1$ of maximal parabolic subgroups of type $1$ (known also as
the Cayley plane); see \cite[3.2]{SV68}, \cite[3.2]{Ti57}.

Consider the complement $X\setminus Y$. This is a $16$-dimensional affine variety over $F$ of non-zero primitive idempotents of $J$.
This variety is a twisted form of $\F_4/\Spin_9$.

\begin{thm}[$R_{3,3}$]\label{tf4}
In the above notation assume that the Albert algebra $J$ comes from the first Tits construction. Then the motive of this twisted form of $\F_4/\Spin_9$ in the category $\mathrm{DM}^{eff}_-(F)$ is isomorphic
to the Rost motive corresponding to the Serre--Rost invariant $g_3\in H^3(F,\zz/3)$ of the Albert algebra $J$.
\end{thm}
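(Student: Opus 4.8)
The plan is to mimic the proof of Theorem~\ref{rn2}, replacing the pair of Pfister quadrics by the pair $(X,Y)$ of twisted flag varieties of types $\E_6$ and $\F_4$. First I would invoke \cite[Ch.~5, Prop.~3.5.4]{FSV00} to obtain, from the closed embedding $f\colon Y\hookrightarrow X$, an exact triangle
$$\Mot(Y)\xrightarrow{f_*}\Mot(X)\to\Mot(X\setminus Y)\to\Mot(Y)[1]$$
in $\mathrm{DM}^{eff}_-(F)$. Then I would use the known motivic decompositions of the Cayley plane $X$ (type $\E_6/P_1$) and of $Y$ (type $\F_4/P_4$) for Albert algebras from the first Tits construction: by the work of Nikolenko--Semenov--Zainoulline and Petrov--Semenov--Zainoulline these varieties are generically split, their upper motive is the Rost motive $R_{3,3}$ of the invariant $g_3\in H^3(F,\zz/3)$, and their decompositions are
$$\Mot(X)=\bigoplus_{i=0}^{?} R_{3,3}\{i\},\qquad \Mot(Y)=\bigoplus_{i=0}^{?} R_{3,3}\{i\},$$
with the number of summands dictated by the Poincaré polynomials: $P(R_{3,3},t)=1+t^4+t^8$ (here $d=\frac{3^2-1}{3-1}=4$), $\dim X=16$, $\dim Y=15$, so $X$ contributes the Tate shifts $i=0,\dots,5$ and $Y$ the shifts $i=0,\dots,4$ (one checks $6\cdot(1+t^4+t^8)$ and $5\cdot(1+t^4+t^8)$ have the correct degrees $16$ and $15$).

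Next, exactly as in Theorem~\ref{rn2}, I would show that $f_*$ admits a left inverse. The idea is to build a correspondence $f^*\circ s\in\CH_{\dim X}(X\times Y)$, where $s$ is a ``shifting'' self-correspondence of $X$ lowering the Tate index by one, and $f^*\in\CH_{\dim X-1}(X\times Y)$ is the transpose of $f_*$; over a splitting field the composite $\Mot(Y)\xrightarrow{f_*}\Mot(X)\xrightarrow{s}\Mot(X)\xrightarrow{f^*}\Mot(Y)$ becomes an isomorphism because it simply matches the $R_{3,3}\{i\}$ summands of $Y$ with the corresponding shifted summands of $X$. Since the Rost nilpotence principle holds for twisted flag varieties (\cite{CGM05}), a composite of correspondences that is an isomorphism after base change is an isomorphism over $F$ (cf.\ \cite[Thm.~1.2]{ViZ08}), so $f_*$ has a left inverse over $F$. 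Consequently the connecting map $\Mot(X\setminus Y)\to\Mot(Y)[1]$ in the triangle is zero, $\Mot(X)\simeq\Mot(Y)\oplus\Mot(X\setminus Y)$, and comparing with the decomposition of $\Mot(X)$ forces $\Mot(X\setminus Y)\simeq R_{3,3}$ since the cone of $f_*$ is unique up to isomorphism.

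The main obstacle is the geometric input of the second paragraph: I need the precise motivic decompositions of the Cayley plane and of $\F_4/P_4$ over $F$ (not just over a splitting field), including the identification of their indecomposable ``Rost'' summand with $R_{3,3}$ for a first-Tits-construction Albert algebra, and I need enough control over these decompositions (in particular over how $f_*$ interacts with the Tate shifts) to exhibit the shifting correspondence $s$ and verify that $f^*\circ s$ is a genuine left inverse. The hypothesis that $J$ comes from the first Tits construction is exactly what guarantees the relevant invariants $f_3$, $f_5$ vanish so that the motives split into copies of a single Rost motive rather than a more complicated generically split decomposition; I would cite the Nikolenko--Semenov--Zainoulline and Petrov--Semenov--Zainoulline computations for this. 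One should also double-check that $X\setminus Y$ really is the affine variety of non-zero primitive idempotents and a twisted form of $\F_4/\Spin_9$ as asserted, but this is the geometric setup already established in the text preceding the statement, so I would simply invoke it.
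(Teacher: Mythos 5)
Your strategy is exactly the paper's: the paper proves Theorem~\ref{tf4} by repeating the argument of Theorem~\ref{rn2} verbatim (Gysin triangle from \cite[Ch.~5, Prop.~3.5.4]{FSV00}, a left inverse of $f_*$ built from a shifting correspondence plus Rost nilpotence, and uniqueness of the cone), with the only new input being the motivic decompositions of $X$ and $Y$ taken from \cite{NSZ09} and \cite[Section~7]{PSZ08}. Your reading of the role of the first Tits construction hypothesis is also correct.

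The one concrete error is in the decompositions themselves, which are the key numerical input. Since $P(R_{3,3},t)=1+t^4+t^8$ (so each copy has rank $3$ and top Tate twist $8$), and since $\dim X=16$ with $\rk\CH(X)=27$ for the Cayley plane, while $\dim Y=15$ with $\rk\CH(Y)=24$ for the twisted $\F_4/P_4$, the correct decompositions are
$$\Mot(X)=\bigoplus_{i=0}^{8} R_{3,3}\{i\},\qquad \Mot(Y)=\bigoplus_{i=0}^{7} R_{3,3}\{i\},$$
i.e.\ nine and eight copies. Your shifts $i=0,\dots,5$ and $i=0,\dots,4$ do not pass the check you propose: they would give top degrees $5+8=13$ and $4+8=12$ (not $16$ and $15$) and total ranks $18$ and $15$ (not $27$ and $24$). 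This does not affect the mechanism of the proof --- with the corrected counts the shifting correspondence $s$ sends $R_{3,3}\{i\}$ to $R_{3,3}\{i-1\}$ for $i=1,\dots,8$, $f^*\circ s$ is an isomorphism over a splitting field, and Rost nilpotence plus uniqueness of the cone give $\Mot(X\setminus Y)\simeq R_{3,3}$ exactly as you describe --- but as written the verification of the decompositions fails and must be replaced by the correct one above.
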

\begin{proof}
The proof is the same as of Theorem~\ref{rn2} with the following motivic decompositions of the varieties $X$ and $Y$
(see \cite{NSZ09}, \cite[Section~7]{PSZ08}):
$$\Mot(X)=\bigoplus_{i=0}^{8} R_{3,3}\{i\},$$
$$\Mot(Y)=\bigoplus_{i=0}^{7} R_{3,3}\{i\}.$$
\end{proof}

Let $p$ be a prime number and let $F$ be a field with $\Char F\ne p$ and containing a primitive $p$-th root of unity. Let $a,b\in F^\times$ and consider the cyclic
algebra $A:=\{a,b\}$ of degree $p$ corresponding to the symbol $(a)\cup(b)\in H^2(F,\mu_p^{\otimes 2})$. Let $X$ be the
product of Severi--Brauer varieties
$\SB(\{a,b\})\times\SB(\{a,b\}^\op)$ of dimension $2(p-1)$. Consider its codimension $1$ subvariety $Y$ of
pairs of right ideals $(I,J)$ of $A$ such that $\dim I=p$, $\dim J=p(p-1)$, and $I\subset J$
(Notice that a right ideal $J$ of $A$
of dimension $p(p-1)$ corresponds to a right ideal $J^o$ of $A^\op$ of dimension $p$ by \cite[1.B]{Inv}).
The variety $Y$ is called the incidence variety. 
The complement $X\setminus Y$ is an affine variety, which is a twisted form of $\SL_p/\GL_{p-1}$.

\begin{thm}[$R_{2,p}$]
In the above notation the motive of $X\setminus Y$ in the category $\mathrm{DM}^{eff}_-(F)$ is isomorphic
to the Rost motive of the symbol $(a)\cup(b)\in H^2(F,\mu_p^{\otimes 2})$.
\end{thm}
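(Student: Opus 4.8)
The plan is to mimic the proof of Theorem~\ref{rn2} verbatim, replacing the Pfister quadrics by the product of Severi--Brauer varieties $X=\SB(\{a,b\})\times\SB(\{a,b\}^\op)$ and its codimension one incidence subvariety $Y$. First I would invoke \cite[Ch.~5, Prop.~3.5.4]{FSV00} for the embedding $f\colon Y\to X$ to obtain the Gysin exact triangle
$$\Mot(Y)\xrightarrow{f_*}\Mot(X)\to\Mot(X\setminus Y)\to\Mot(Y)[1]$$
in $\mathrm{DM}^{eff}_-(F)$. The next step is to record the motivic decompositions of $X$ and $Y$ into Rost motives. Since $\SB(\{a,b\})$ is the Severi--Brauer variety of a degree $p$ division algebra (or split, in which case everything is Tate and the statement is trivial), Karpenko's theorem gives $\Mot(\SB(\{a,b\}))=\bigoplus_{i=0}^{p-1}R_{2,p}\{i\}$, and likewise for the opposite algebra; taking the product and reindexing yields
$$\Mot(X)=\bigoplus_{i=0}^{2(p-1)}R_{2,p}\{i\}\oplus(\text{copies of }R_{2,p}\{i\}\text{ with multiplicities}).$$
Here care is needed: the product of two Rost motives $R_{2,p}\{i\}\otimes R_{2,p}\{j\}$ is \emph{not} itself a sum of Tate twists of $R_{2,p}$ in general, so the clean shape $\bigoplus_{i=0}^{2(p-1)}R_{2,p}\{i\}$ used implicitly in the analogy must be justified. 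In fact for $p$-primary Severi--Brauer varieties of a division algebra of degree $p$ the product $\SB(A)\times\SB(A^\op)$ does decompose as $\bigoplus_{i=0}^{2(p-1)}R_{2,p}\{i\}$ — this is exactly the content of the known motivic decomposition of such products (it is the ``generically split'' case, and one checks the Poincar\'e polynomial $(1-t^{p^2})/(1-t)\cdot\frac{1}{?}$ matches), so I would cite the appropriate place (e.g. the general decomposition of \cite{PSZ08}, Section~7, applied to the type $\A_{p-1}$ flag variety $Y$ and the cellular-over-the-splitting-field varieties $X$, $Y$). Similarly $\Mot(Y)=\bigoplus_{i=0}^{2(p-1)-1}R_{2,p}\{i\}$, one summand shorter, since $Y$ is a twisted $\A_{p-1}$-flag variety of one lower dimension that is again generically split with Rost motive $R_{2,p}$.

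Granting these two decompositions, the argument proceeds exactly as before. I would show $f_*\colon\Mot(Y)\to\Mot(X)$ admits a left inverse by constructing the composite $\Mot(Y)\xrightarrow{f_*}\Mot(X)\xrightarrow{s}\Mot(X)\xrightarrow{f^*}\Mot(Y)$, where $s$ is the degree-shifting correspondence sending $R_{2,p}\{i\}\mapsto R_{2,p}\{i-1\}$ for $i\geq 1$. Over a splitting field of $A$ both $X$ and $Y$ become cellular (products of projective spaces, resp. flag varieties), all the $R_{2,p}$ become sums of Tate motives, and one checks the composite is an isomorphism on each Tate summand by a direct intersection-theoretic computation on $(X\times Y)_{\bar F}$, just as in \cite{ViZ08}, Thm.~1.2. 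Since $\SB$ varieties and their twisted flags satisfy Rost nilpotence (\cite{CGM05}), an isomorphism over the splitting field that is defined over $F$ lifts to a splitting of $f_*$ over $F$. Hence the triangle degenerates:
$$\Mot(Y)\xrightarrow{f_*}\Mot(X)\to R_{2,p}\xrightarrow{0}\Mot(Y)[1],$$
and uniqueness of the cone gives $\Mot(X\setminus Y)\simeq R_{2,p}$, which is the Rost motive of $(a)\cup(b)$ since $X\setminus Y$ becomes split precisely when $A$ splits, i.e. when the symbol vanishes.

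The main obstacle I expect is the bookkeeping in the motivic decomposition of the \emph{product} $X=\SB(A)\times\SB(A^\op)$: one must be sure that it is genuinely $\bigoplus_{i=0}^{2(p-1)}R_{2,p}\{i\}$ and not something with extra indecomposable summands built from tensor squares of $R_{2,p}$. For $p=2$ this is classical (it recovers Theorem~\ref{rn2} for $n=2$ via the Pfister quadric of a quaternion algebra), but for general $p$ one needs either the upper-motive machinery of Karpenko or the generic splitting decomposition of \cite{PSZ08}; I would phrase the proof to cite that decomposition directly rather than re-derive it. A secondary point to verify carefully is that the shifting correspondence $s$ and the pullback $f^\ast$ land in the correct Chow-dimensional components ($s\in\CH_{\dim X+1}(X\times X)$, $f^\ast\in\CH_{\dim X-1}(X\times Y)$, so $f^\ast\circ s\in\CH_{\dim X}(X\times Y)$), exactly as in the proof of Theorem~\ref{rn2}; this is routine once the decompositions are in hand.
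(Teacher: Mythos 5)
Your overall skeleton is exactly the paper's: Gysin triangle for $Y\subset X$, splitting of $f_*$ via a shift correspondence checked over a splitting field and descended by Rost nilpotence, then uniqueness of the cone. But the key input --- the motivic decompositions of $X$ and $Y$ --- is stated incorrectly, and this is a genuine error rather than bookkeeping. For a division algebra $A=\{a,b\}$ of degree $p$, Karpenko's theorem says that $\Mot(\SB(A))$ is \emph{indecomposable} and is itself the Rost motive $R_{2,p}$ (this is precisely the remark following the theorem in the paper); it is not $\bigoplus_{i=0}^{p-1}R_{2,p}\{i\}$, which already has the wrong rank ($p^2$ Tate summands over a splitting field for a $(p-1)$-dimensional variety). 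Consequently your claimed decomposition $\Mot(X)=\bigoplus_{i=0}^{2(p-1)}R_{2,p}\{i\}$ is false: over a splitting field $X$ becomes $\PP^{p-1}\times\PP^{p-1}$, whose Chow group has rank $p^2$, while your expression has rank $p(2p-1)$ (for $p=2$: $4$ versus $6$). The correct statements, and the ones the paper uses, are
$$\Mot(X)=\bigoplus_{i=0}^{p-1}R_{2,p}\{i\},\qquad \Mot(Y)=\bigoplus_{i=0}^{p-2}R_{2,p}\{i\},$$
with ranks $p^2$ and $p(p-1)$ matching the cell counts of the split forms. These follow from \cite[Section~7]{PSZ08} because $X$ and $Y$ are generically split twisted flag varieties for $\PGL_p$ (twisted forms of $\PP^{p-1}\times\PP^{p-1}$ and of the incidence variety of points and hyperplanes); in particular your worry about decomposing the tensor product $R_{2,p}\otimes R_{2,p}$ by hand is resolved by applying that theorem to $X$ directly, not by multiplying decompositions of the two factors --- the step where you pass from the (incorrect) decomposition of each factor to a decomposition of the product is exactly where the argument as you wrote it breaks.

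This matters for the endgame: the mechanism of the proof is that $\Mot(Y)$ is ``one shifted copy of $R_{2,p}$ shorter'' than $\Mot(X)$, so that once $f_*$ is split the cone of $f_*$ is a single copy of $R_{2,p}$. With the decompositions you wrote, the numerology is inconsistent and the shift correspondence $s$ cannot even be set up so that the composite $f^*\circ s\circ f_*$ is an isomorphism over the splitting field. Once you replace them by the correct decompositions above, the rest of your argument (left inverse of $f_*$, Rost nilpotence for twisted flag varieties, degeneration of the triangle, uniqueness of the cone) coincides with the paper's proof of Theorem~\ref{rn2} and goes through verbatim.
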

\begin{proof}
The proof is the same as of Theorem~\ref{rn2} with the following motivic decompositions of the varieties $X$ and $Y$
(see \cite[Section~7]{PSZ08}):
$$\Mot(X)=\bigoplus_{i=0}^{p-1} R_{2,p}\{i\},$$
$$\Mot(Y)=\bigoplus_{i=0}^{p-2} R_{2,p}\{i\}.$$
\end{proof}

\begin{rem}
The motive $R_{2,p}$ is also isomorphic to the motive of $\SB(\{a,b\})$, hence, can be also realized as the motive
of a projective variety.
\end{rem}

Finally, let $F$ be a field with $\Char F\ne 2$. Consider an octonion algebra $C$ over $F$ (see e.g. \cite[Ch.~VIII]{Inv}, \cite{SV00}).
The norm form of $C$ is a  $3$-fold Pfister form $q$ over $F$. Denote by $Q$ and $Q'$ the projective quadrics given by
$q=0$ and resp. by $q'=0$, where $q'$ is the restriction of $q$ to the $7$-dimensional subspace of $C$ of trace zero
elements.

The group $\Aut(C)$ is a group of type $\G_2$ and is a subgroup of $\Spin(q')$.
It acts on the variety of parabolic subgroups of type $3$ of the group $\Spin(q')$ with $2$ orbits (see \cite{Cu03}). The closed orbit
is isomorphic to a $5$-dimensional Pfister neighbour quadric $Q'$
and is a twisted form of $\G_2/P_1$, and the open
orbit is isomorphic to a $6$-dimensional affine Pfister quadric, which is a twisted form of $\G_2/\SL_3$. Notice
that via an exceptional isomorphism the variety of parabolic subgroups of $\Spin(q')$ of type $3$ is isomorphic to the
$6$-dimensional projective Pfister quadric $Q$.

\begin{thm}[$R_{3,2}$]\label{tg2}
In the above notation the motive of the respective twisted form of $\G_2/\SL_3$ in the category $\mathrm{DM}^{eff}_-(F)$ is isomorphic
to the Rost motive of the symbol in $H^3(F,\zz/2)$ given by the norm form of the octonion algebra $C$.
\end{thm}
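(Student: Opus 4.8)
The plan is to imitate verbatim the argument of Theorem~\ref{rn2}, so the whole task reduces to producing the two ingredients that drove that proof: an exact triangle in $\mathrm{DM}^{eff}_-(F)$ relating the motives of the closed and the open orbit to the motive of the ambient projective variety, and the motivic decompositions of the two projective homogeneous pieces into shifts of $R_{3,2}$. First I would set $Q$ to be the $6$-dimensional projective Pfister quadric attached to the norm form $q$ of $C$, and $Q'$ the $5$-dimensional Pfister neighbour quadric cut out by $q'$ (the trace-zero subform); by the discussion just above the theorem, $Q$ is, via the exceptional isomorphism, the variety of parabolic subgroups of type $3$ of $\Spin(q')$, while $Q'$ is the closed $\Aut(C)$-orbit in it and $Q\setminus Q'$ is the open orbit, which is the twisted form of $\G_2/\SL_3$ in question. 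So the geometric input needed is exactly: the open orbit equals the affine Pfister quadric $Q\setminus Q'$. Once this identification is in place, I apply \cite[Ch.~5, Prop.~3.5.4]{FSV00} to the closed embedding $f\colon Q'\hookrightarrow Q$ to get the exact triangle
$$\Mot(Q')\xrightarrow{f_*}\Mot(Q)\to\Mot(Q\setminus Q')\to\Mot(Q')[1].$$

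Next I would record the motivic decompositions. Since $q$ is a $3$-fold Pfister form, $Q$ and $Q'$ are generically split projective homogeneous quadrics, and by \cite{Ro98}, \cite[Section~7]{PSZ08} (with $n=3$, $p=2$, so $d=(2^{2}-1)/(2-1)=3$ and $2^{n-1}=4$) one has
$$\Mot(Q)=\bigoplus_{i=0}^{3} R_{3,2}\{i\},\qquad \Mot(Q')=\bigoplus_{i=0}^{2} R_{3,2}\{i\}.$$
Exactly as in Theorem~\ref{rn2}, I build the shifting correspondence $s\in\CH_{\dim Q+1}(Q\times Q)$ sending $R_{3,2}\{i\}$ to $R_{3,2}\{i-1\}$, and note that $f^*\circ s\circ f_*$ is an isomorphism of $\Mot(Q')$ over a splitting field of $C$ (equivalently of $q$); since Rost nilpotence holds for these quadrics (\cite[Prop.~9]{Ro98}, \cite[Section~8]{CGM05}), the map $f_*$ admits a left inverse over $F$, using \cite[Thm.~1.2]{ViZ08}. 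Then the cone of $f_*$ splits off $\Mot(Q')$ from $\Mot(Q)$, leaving $R_{3,2}$; by uniqueness of the cone, $\Mot(Q\setminus Q')\simeq R_{3,2}$, and since $Q\setminus Q'$ is the twisted form of $\G_2/\SL_3$ and the Rost motive of the norm symbol of $C$ is precisely $R_{3,2}$ for that symbol (the norm form of $C$ being a $3$-fold Pfister form whose associated symbol in $H^3(F,\zz/2)$ is the one in the statement), this is the claimed identification.

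The one genuinely non-routine point, and the step I expect to be the main obstacle, is justifying the geometric identification that the open $\Aut(C)$-orbit on the variety of type-$3$ parabolics of $\Spin(q')$ is scheme-theoretically (not merely set-theoretically) the affine quadric complement $Q\setminus Q'$, so that the localization triangle of \cite{FSV00} applies directly. Here I would invoke the orbit analysis of \cite{Cu03}: the two-orbit decomposition identifies the closed orbit with $Q'$ and forces the open orbit to be its complement in $Q$ under the exceptional isomorphism $\Spin(q')$-parabolics of type $3$ $\;\cong\; Q$; one must check this isomorphism is $\Aut(C)$-equivariant and carries the closed orbit to $Q'$ as a closed subscheme, which follows from the $\G_2$-structure of $\Aut(C)$ as a subgroup of $\Spin(q')$ and the fact that $\G_2/P_1 \cong Q'$ over the splitting field together with Galois descent. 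Everything after that — the triangle, the decompositions, the left-inverse argument, the cone — is formally identical to the proof of Theorem~\ref{rn2} and Theorem~\ref{tf4}, so I would simply write ``the proof is the same as that of Theorem~\ref{rn2} with the following motivic decompositions'' and list the two displays above.
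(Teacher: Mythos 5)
Your proposal is correct and follows essentially the same route as the paper: the paper's proof of Theorem~\ref{tg2} is literally ``the same as of Theorem~\ref{rn2}'', taking the identification of the open $\Aut(C)$-orbit with the affine Pfister quadric $Q\setminus Q'$ (via \cite{Cu03} and the exceptional isomorphism) from the discussion preceding the statement, and then running the localization triangle, the known decompositions of $\Mot(Q)$ and $\Mot(Q')$ into shifts of $R_{3,2}$, and the Rost-nilpotence splitting argument exactly as you do. Your copy counts (four shifts of $R_{3,2}$ for the $6$-dimensional quadric $Q$ and three for $Q'$) are the correct ones, so no gap remains.
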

\begin{proof}
The proof is the same as of Theorem~\ref{rn2}. Notice that $Q$ is a $6$-dimensional Pfister quadric and $Q'$ is its codimension
$1$ subquadric.
\end{proof}

\section{$G$-equivariant motives}\label{seclift}
Let $G$ be a split semisimple linear algebraic group over a field $F$.
A subgroup $P$ of $G$ is called {\it special}, if $H^1(E,P)=1$ for all field extensions $E/F$.
By \cite[Cor.~6.8]{KM06} if $P$ is a special parabolic subgroup, then every inner twisted form of $G/P$ is generically split.

Let $T$ denote a split maximal torus of $G$, $\widehat T$ the character group of $T$,
and $P$ a special parabolic subgroup of $G$. By $W_P$ denote the Weyl group of $P$.

Consider the equivariant Chow group $\CH_G(G/P\times G/P)$. By the Chernousov--Merkurjev method \cite[Section~6]{GPS12}
there is a decomposition of $\CH_G(G/P\times G/P)$ into a direct sum of groups $\CH_G(G/P')$ for some special
parabolic subgroups $P'$ of $G$. (In \cite{GPS12} we consider the ordinary Chow groups, but the same proof works for
equivariant Chow groups as well.)

Since $P'$ is special, one has  by \cite[Section~3.2]{EG98} $\CH_G(G/P')=\CH(BP')=S(\widehat T)^{W_{P'}}$, the group of $W_{P'}$-invariants
of the symmetric algebra of $\widehat T$. In particular, $\CH_G(G/P')$ and $\CH_G(G/P\times G/P)$ are torsion-free.
Therefore, since by \cite[Prop.~6]{EG98} $$\CH_G(G/P\times G/P)\otimes\qq\subset\CH_T(G/P\times G/P)\otimes\qq,$$ the natural homomorphism
\begin{align}\label{inj}
\CH_G(G/P\times G/P)\to\CH_T(G/P\times G/P)
\end{align}
is injective. Moreover, this homomorphism is compatible
with the composition of correspondences.

\begin{thm}\label{t1}
Let $G$ be a split semisimple linear algebraic group over a field $F$ and $P$ be a special parabolic subgroup of $G$ over $F$. Let $$\Mot(G/P)=\bigoplus_i M_i$$ be a motivic decomposition of the ordinary motive of $G/P$, where $M_i=(G/P,\pi_i)$, $\pi_i\in\CH_{\dim G/P}(G/P\times G/P)$. Assume that all $\pi_i$ lie in the image of the natural homomorphism $$\CH_G(G/P\times G/P)\xrightarrow{\rho}\CH(G/P\times G/P).$$

Then this motivic decomposition of $G/P$ lifts to a motivic decomposition of the equivariant
motive $\Mot_G(G/P)$. Namely, $$\Mot_G(G/P)=\bigoplus_i\widetilde{M_i},$$
where $\widetilde{M_i}=(G/P,\widetilde{\pi_i})$ and $\rho(\widetilde{\pi_i})=\pi_i$ for all $i$.

If $M_i\simeq M_j\{l\}$ for some $i$, $j$, $l$, then $\widetilde{M_i}\simeq\widetilde{M_j}\{l\}$.
\end{thm}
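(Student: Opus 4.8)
The plan is to translate the statement into a lifting problem for idempotents and units along a ring homomorphism with nilpotent kernel, and then to invoke the standard idempotent-lifting machinery. Set $d=\dim G/P$ and collect all Hom-groups between Tate twists into a single graded ring
$$\mathcal{E}:=\bigoplus_{l\in\zz}\Hom(\Mot_G(G/P),\Mot_G(G/P)\{l\}),$$
which is $\CH_G(G/P\times G/P)$ graded by Tate twist and equipped with the composition product; the diagonal $\Delta_G$ sits in degree $0$, the ring vanishes in degrees $<-d$, and $\CH(BG)=\CH_G(\pt)$ maps to $\mathcal{E}$ centrally (pull a class back from $BG$ to $G/P\times G/P$ and multiply by $\Delta_G$), with $\CH^k(BG)$ landing in degree $k$; write $\mathfrak b=\CH^{>0}(BG)$ for the augmentation ideal. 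Passing to ordinary Chow groups gives a graded ring homomorphism $\rho\colon\mathcal{E}\to\bar{\mathcal{E}}:=\CH(G/P\times G/P)=\End^\bullet(\Mot(G/P))$ which in degree $0$ is the endomorphism version of the map in the statement. Everything reduces to: lift the complete orthogonal family $\{\pi_i\}\subset\bar{\mathcal{E}}_0$ to a complete orthogonal family $\{\widetilde{\pi_i}\}\subset\mathcal{E}_0$ with $\rho(\widetilde{\pi_i})=\pi_i$, and then lift the prescribed isomorphisms.

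The heart of the matter is that $K:=\ker(\rho\colon\mathcal{E}_0\to\bar{\mathcal{E}}_0)$ is nilpotent. Both $\CH_G(G/P\times G/P)$ and $\CH(G/P\times G/P)$ are torsion-free: by the Chernousov--Merkurjev decomposition (\cite[Section~6]{GPS12}) they are direct sums of the torsion-free groups $\CH_G(G/P')=\CH(BP')=S(\widehat T)^{W_{P'}}$, resp. of the $\CH(G/P')$. By Borel's presentation of the rational Chow ring of a flag variety, after $\otimes\qq$ the map $\rho$ becomes surjective with kernel $\mathfrak b_\qq\mathcal{E}_\qq$, so $K\otimes\qq=(\mathfrak b_\qq\mathcal{E}_\qq)_0$. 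Since $\mathfrak b$ is central and concentrated in positive degrees while $\mathcal{E}$ vanishes below degree $-d$, one has $(\mathfrak b^{\,d+1}\mathcal{E})_0=0$, so $K\otimes\qq$ is nilpotent of index $\le d+1$; by torsion-freeness of $K$ this forces $K^{\,d+1}=0$. (Alternatively one runs the same boundedness argument inside $\CH_T(G/P\times G/P)$, using the composition-compatible injection \eqref{inj} together with the fact that, for a cellular variety, $\CH_T(-)\to\CH(-)$ is reduction modulo the augmentation ideal of $\CH(BT)$.)

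With $K$ nilpotent the rest is routine. By hypothesis each $\pi_i\in\Im\rho$, and since $\rho$ is graded and $\pi_i$ has degree $0$ its preimage may be taken in $\mathcal{E}_0$; thus $\rho|_{\mathcal{E}_0}$ is a ring surjection onto a subring $S\ni\Delta,\pi_1,\dots,\pi_k$ with nilpotent kernel $K$. A complete orthogonal family of idempotents lifts along any such surjection: lift $\pi_1$ to an idempotent $\widetilde{\pi_1}$ by Newton's method modulo $K$, pass to the corner ring $(\Delta_G-\widetilde{\pi_1})\mathcal{E}_0(\Delta_G-\widetilde{\pi_1})$, whose reduction is the corresponding corner of $S$, and induct; normalising so that $\sum_i\widetilde{\pi_i}=\Delta_G$ yields $\Mot_G(G/P)=\bigoplus_i\widetilde{M_i}$ with $\widetilde{M_i}=(G/P,\widetilde{\pi_i})$ and $\rho(\widetilde{\pi_i})=\pi_i$. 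For the last assertion, choose an isomorphism $\phi\colon M_i\xrightarrow{\ \sim\ }M_j\{l\}$, lift it to a morphism $\tilde\phi\colon\widetilde{M_i}\to\widetilde{M_j}\{l\}$ (lift $\phi$ to $\mathcal{E}_l$ and sandwich the lift as $\widetilde{\pi_j}\circ(-)\circ\widetilde{\pi_i}$, using compatibility of $\rho$ with composition), and similarly lift $\phi^{-1}$. Then $\tilde\phi\circ\widetilde{\phi^{-1}}$ and $\widetilde{\phi^{-1}}\circ\tilde\phi$ reduce modulo $\ker\rho$ to $\id_{M_j\{l\}}$ and $\id_{M_i}$, so they differ from the identities of $\widetilde{M_j}\{l\}$ and $\widetilde{M_i}$ by elements of nilpotent ideals in those endomorphism rings, hence are invertible; therefore $\tilde\phi$ is an isomorphism.

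The step I expect to be the real obstacle is the nilpotence of $\ker\rho$: for a general surjection of finite $\zz$-algebras the kernel need not be nilpotent, and what rescues the situation is the combination of (i) boundedness below of the Tate-graded endomorphism ring, (ii) positivity and centrality of the classes pulled back from $BG$, and (iii) Borel's rational presentation to pin down the rational kernel. A secondary technical point is checking that $\rho$ is compatible with composition of correspondences and that the required preimages — of the $\pi_i$, granted by hypothesis, and of the chosen isomorphisms — do exist in $\mathcal{E}$.
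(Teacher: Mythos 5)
Your proposal is correct, and its skeleton coincides with the paper's: everything is reduced to the statement that the kernel of the degree-preserving, composition-compatible map $\CH_G(G/P\times G/P)\to\CH(G/P\times G/P)$ consists of nilpotent elements (for the composition product), after which projectors and the isomorphisms $M_i\simeq M_j\{l\}$ are lifted. The differences are in packaging. For the lifting step the paper simply cites \cite[Prop.~2.6]{PSZ08}, whereas you reprove the standard idempotent/isomorphism lifting along a surjection with nilpotent kernel by hand; that is fine and self-contained. For the nilpotence, the paper passes through the injection~\eqref{inj} into $\CH_T(G/P\times G/P)$, uses the explicit free $S(\widehat T)$-basis $Z^T_w\times Z^T_u$ (with the map to ordinary Chow groups given by taking constant terms of coefficients), and concludes by the strict growth of coefficient degree under composition, bounded by the total degree $m=\dim G/P$. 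Your primary route replaces this by: torsion-freeness of $\CH_G(G/P\times G/P)$ (via the Chernousov--Merkurjev decomposition into $\CH(BP')=S(\widehat T)^{W_{P'}}$, as recalled before the theorem), centrality of $\CH(BG)$ with respect to composition, and the identification of the rational kernel with the ideal generated by $\CH^{>0}(BG)_\qq$; the same boundedness-below of the Tate grading then kills $(d+1)$-fold composites rationally, and torsion-freeness descends this to $\zz$. This is essentially the same degree-growth mechanism, just run over $\CH(BG)_\qq$ instead of $S(\widehat T)$; the one place where you are terser than you should be is the assertion that $\ker(\rho\otimes\qq)=\mathfrak b_\qq\mathcal E_\qq$ (only the inclusion $\subseteq$ is needed), which does require routing through the Chernousov--Merkurjev decomposition so that on each summand it becomes the classical statement that the rational characteristic map $\CH(BP')_\qq\to\CH(G/P')_\qq$ has kernel generated by positive-degree Weyl-invariants, together with the compatibility of the $\CH(BG)$-module structures; alternatively, your sketched $T$-equivariant variant (kernel of $\CH_T\to\CH$ generated by the augmentation ideal of $\CH(BT)$, plus compatibility of~\eqref{inj} with composition) is exactly the paper's argument. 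What your version buys is independence from the explicit Schubert-basis description of $\CH_T(G/P\times G/P)$; what the paper's buys is that all inputs are already stated verbatim in the surrounding text.
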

\begin{proof}
Denote $m:=\dim G/P$.
By \cite[Prop.~2.6]{PSZ08} it suffices to check
that the kernel of $$\CH^m_G(G/P\times G/P)\to\Im\Big(\CH^m_G(G/P\times G/P)\to\CH^m(G/P\times G/P)\Big)$$
consists of nilpotent elements.

Since the homomorphism~\eqref{inj} is injective, it suffices to prove that
the kernel of the natural homomorphism
\begin{align}\label{prc}
\CH^m_T(G/P\times G/P)\to\CH^m(G/P\times G/P).
\end{align}
consists of nilpotent elements (with respect to the composition of correspondences).

The description of $\CH_T(G/P\times G/P)$ and of $\CH(G/P\times G/P)$ is given e.g. in \cite[Section~5]{GPS12}. Namely,
$\CH_T(G/P\times G/P)$ (resp. $\CH(G/P\times G/P)$) has a free basis as a module over $\CH_T(\pt)=S(\widehat T)$
(resp. as a module over $\CH(\pt)=\zz$) of the form
$Z_w^T\times Z_u^T$, $w,u\in W/W_P$ (resp. of the form $Z_w\times Z_u$).

The ring $\CH_T(\pt)$ is a subring of the symmetric algebra
$\zz[\w_1,\ldots,\w_n]$, where $\w_i$ are the fundamental weights for $G$ and the projection
$\CH_T(G/P\times G/P)\to\CH(G/P\times G/P)$ sends $f\cdot Z_w^T\times Z_u^T$ to $\bar f\cdot Z_w\times Z_u$,
where $\bar f$ is the constant term of the polynomial $f\in\CH_T(\pt)$.

Therefore the kernel of the projection~\eqref{prc} is generated by the elements of the form $f\cdot Z_w^T\times Z_u^T$
for homogeneous polynomials $f$ without constant term. Notice that codimension of
$f\cdot Z_w^T\times Z_u^T$ equals $\deg f+l(w)+l(u)$, where $l$ is the length of the minimal representative
in the respective coset. In particular, composing two elements from the kernel of \eqref{prc} the degree of the
coefficient will strictly grow. On the other hand, since the total degree is $m$ and there are no elements
in $\CH_T(G/P\times G/P)$ of negative degree, we will arrive at some point to $0$.
\end{proof}

\begin{rem}
Theorem~\ref{t1} holds (with the same proof) for motives with any coefficient ring.
\end{rem}

We apply now this theorem to the case of the variety of Borel subgroups of $G$.

Let $p$ be a prime number.
Let $B$ be a Borel subgroup of $G$ containing $T$. The motivic decomposition of the ordinary motive of
a {\it twisted form} of $G/B$ with $\zz/p$-coefficients is given in \cite[Thm.~5.13]{PSZ08}. It depends on several combinatorial parameters
and a geometric invariant, the $J$-invariant.

In the table below we summarize the combinatorial parameters related to the $J$-invariant and list the degrees of generators
of certain ideal $I_p$ taken from the last column of table 2 of \cite{Kac85}. In the next sections we will give a geometric interpretation
of these degrees.

\begin{center}
\begin{longtable}{l|l|l|l|l|l}
\caption{Parameters of the $J$-invariant}\label{jinv}\\
Group & $p$ & $r$ & $d_i$, $i=1\ldots r$ & $k_i$, $i=1\ldots r$ & degrees for generators of $I_p$, $d_{i,p}$\\
\hline
$\SL_n/\mu_m,\,m|n$       & $p|m$ & $1$           &  $1$  & $p^{k_1}\parallel n$ & $1,\,2,\ldots,\xout{p^{k_1}},\ldots,n$  \\
$\PGSp_n,\,2|n$           & $2$       & $1$                       & $1$    & $2^{k_1}\parallel n$         & $1,\,2,\ldots,\xout{2^{k_1}},\ldots,n$ \\
$\mathrm{O}^+_n$, $n>2$       & $2$       & $[\frac{n+1}{4}]$       & $2i-1$    & $[\log_2\frac{n-1}{2i-1}]$   & $2,\,3,\ldots,[\frac n2]$\\
$\Spin_n$, $n>2$              & $2$       & $[\frac{n-3}{4}]$         & $2i+1$  & $[\log_2\frac{n-1}{2i+1}]$  & $1,\,2,\,3,\ldots,[\frac n2],\,2^s$\  ($2^s<n\le 2^{s+1})$\\
$\PGO_{2n}^+$, $n>1$            & $2$       & $[\frac{n+2}{2}]$      & $1,\,i=1$ & $2^{k_1}\parallel n$         & $1,\,1,\,2,\,3,\ldots,\xout{2^{k_1}},\ldots,n$ \\
                              &           &                 & $2i-3,\,i\ge 2$  &$[\log_2\frac{2n-1}{2i-3}]$   & \\
$\Spin_{2n}^{\pm},\,2\vert n$ & $2$       & $\frac{n}{2}$         & $1,\,i=1$  &$2^{k_1}\parallel n$          & $1,\,2,\,3,\ldots,\xout{2^{k_1}},\ldots,n,\,2^s$\\
                              &           &                  & $2i-1,\,i\ge 2$  & $[\log_2\frac{2n-1}{2i-1}]$ & ($2^s<n\le 2^{s+1})$\\
$\G_2$              & $2$       & $1$     & $3$ & $1$                         & $2,\,3$\\
$\F_4$              & $2$       & $1$     & $3$ & $1$                         & $2,\,3,\,8,\,12$\\
$\E_6$              & $2$       & $1$     & $3$ & $1$                         & $2,\,3,\,5,\,8,\,9,\,12$\\
$\F_4$       & $3$       & $1$            & $4$  & $1$                          & $2,\,4,\,6,\,8$\\
$\E_6^{sc}$       & $3$       & $1$       & $4$  & $1$                          & $2,\,4,\,5,\,6,\,8,\,9$\\
$\E_7$       & $3$       & $1$            & $4$  & $1$                          & $2,\,4,\,6,\,8,\,10,\,14,\,18$\\
$\E_6^{ad}$   & $3$       & $2$                & $1,\,4$  & $2,\,1$                     & $1,\,2,\,4,\,5,\,6,\,8$\\
$\E_7^{sc}$                   & $2$       & $3$                  & $3,\,5,\,9$ & $1,\,1,\,1$    & $2,\,3,\,5,\,8,\,9,\,12,\,14$\\
$\E_7^{ad}$                   & $2$       & $4$                & $1,\,3,\,5,\,9$ & $1,\,1,\,1,\,1$           & $1,\,3,\,5,\,8,\,9,\,12,\,14$\\
$\E_8$                        & $2$       & $4$             & $3,\,5,\,9,\,15$  & $3,\,2,\,1,\,1$             & $2,\,3,\,5,\,8,\,9,\,12,\,14,\,15$\\
$\E_8$                        & $3$       & $2$            & $4,\,10$           & $1,\,1$                    & $2,\,4,\,8,\,10,\,14,\,18,\,20,\,24$\\
$\E_8$                        & $5$       & $1$                        & $6$   & $1$                          &$2,\,6,\,8,\,12,\,14,\,18,\,20,\,24$\\
\hline
\end{longtable}
\end{center}

Let $\xi\in Z^1(E,G)$ be a $1$-cocycle over a field extension $E$ of the base field $F$ such that its $J$-invariant takes its maximal possible value,
i.e. $J_p(\xi)=(k_1,\ldots,k_r)$ (e.g. if $\xi$ corresponds to a generic torsor; see \cite[Example~4.7]{PSZ08}).
Then by \cite[Thm.~5.13]{PSZ08} the motivic decomposition of the respective twisted form of $G/P$ looks as follows:
\begin{align}\label{motdec}
\Mot({}_\xi(G/P))\otimes\zz/p=\bigoplus_{i\ge 0} R_{p}(\xi)\{i\}^{\oplus c_i},
\end{align}
where the Poincar\'e polynomial of $R_p(\xi)$ over a splitting field of $\xi$ equals
\begin{align}\label{f5}
P(\Ch(R_p(\xi)),t)=\prod_{j=1}^r\frac{1-t^{d_jp^{k_j}}}{1-t^{d_j}}
\end{align}
and the integers $c_i$ are the coefficients of the quotient
$$\sum_{i\ge 0}c_it^i=P(\Ch(G/B),t))/P(\Ch(R_p(\xi)),t).$$

Finally, the Poincar\'e polynomial of $\Ch(G/B)$ does not depend on $p$ and equals
\begin{align}\label{f6}
P(\Ch(G/B),t)=\prod_{i=1}^l\frac{1-t^{e_i}}{1-t},
\end{align}
where $e_i$ are the degrees of fundamental polynomial invariants of $G$ and are given in the following table (see \cite[Section~2]{PS10}):
\begin{center}
\begin{longtable}{c|c}
\caption{Degrees of fundamental polynomial invariants}\\
Dynkin type & $e_i$\\
\hline
$\mathrm{A}_m$ & $2,3,\ldots,m+1$\\
$\mathrm{B}_m,\mathrm{C}_m$ & $2,4,\ldots, 2m$\\
$\mathrm{D}_m$ & $2,4,\ldots, 2m-2,m$\\
$\mathrm{E}_6$ & $2,5,6,8,9,12$\\
$\mathrm{E}_7$ & $2,6,8,10,12,14,18$\\
$\mathrm{E}_8$ & $2,8,12,14,18,20,24,30$\\
$\mathrm{F}_4$ & $2,6,8,12$\\
$\mathrm{G}_2$ & $2,6$\
\end{longtable}
\end{center}

\begin{lem}\label{maxj}
The $J$-invariant of $\xi$ is maximal, i.e., equals $(k_1,\ldots,k_r)$, if and only if the image
of the restriction map $\psi_{\xi}\colon\Ch({}_\xi(G/B))\to\Ch(G/B)$ coincides with the image of the
characteristic map $\varphi\colon\Ch(BB)\to\Ch(G/B)$.
\end{lem}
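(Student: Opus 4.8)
The plan is to use the standard description of the $J$-invariant in terms of rational cycles together with the image of the characteristic map. Recall from \cite[Section~5]{PSZ08} that $\Ch(G/B)$ carries the structure of a module over $\Ch(BB)=\Ch(\pt)[\w_1,\dots,\w_l]/I$ via the characteristic map $\varphi$, that the image of $\varphi$ is generated (as a ring) by classes of degree $1$, and that the $J$-invariant $J_p(\xi)$ is by definition determined by the subring $\Im\psi_\xi\subset\Ch(G/B)$ of \emph{rational cycles}: concretely, $J_p(\xi)=(k_1,\dots,k_r)$ is read off from the minimal powers of certain polynomial generators $r_i$ (of degree $d_i$) that become rational. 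The key inclusion to observe first is that $\Im\varphi\subseteq\Im\psi_\xi$ always holds, because the classes of degree $1$ in $\Ch(G/B)$ are pullbacks of classes in $\Ch({}_\xi(G/B))$ — degree $1$ cycles on a twisted flag variety are always rational, since $\Pic$ descends for any inner form (or directly: the characteristic map factors through restriction). So the content of the lemma is the reverse inclusion, and I would phrase everything as: $J_p(\xi)$ is maximal $\iff$ $\Im\psi_\xi\subseteq\Im\varphi$.

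Next I would unwind what "maximal $J$-invariant" means on the level of rational cycles. By the construction in \cite[Def.~4.6, Thm.~5.13]{PSZ08}, there are distinguished elements — the generators of the image of $\varphi$ in low degrees together with the polynomial generators $r_1,\dots,r_r$ of the $\Ch(\pt)$-subalgebra they generate in $\Ch(G/B)$ modulo the image of $\varphi$ — and $J_p(\xi)$ records, for each $i$, the least $j_i$ with $r_i^{p^{j_i}}\in\Im\psi_\xi$. The maximal value $(k_1,\dots,k_r)$ is attained precisely when $r_i^{p^{j}}\notin\Im\psi_\xi$ for all $j<k_i$; but by the very definition of the $k_i$ (which are the top exponents occurring, cf. the relation $r_i^{p^{k_i}}\in\Im\varphi$ built into the structure of $\Ch(G/B)$), this is equivalent to saying that $\Im\psi_\xi$ contains \emph{no} new $r_i$-power beyond those already in $\Im\varphi$, i.e. $\Im\psi_\xi$ is generated by $\Im\varphi$ together with nothing extra. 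Since $\Im\psi_\xi$ is a $\Ch(\pt)$-subalgebra of $\Ch(G/B)$ containing $\Im\varphi$ and lying inside the algebra generated by $\Im\varphi$ and the $r_i$, and since it is $\Ch(\pt)$-free of a rank determined by the $J$-invariant (the Poincaré polynomial of $R_p(\xi)$ is \eqref{f5}), the condition "no extra generators" is literally the equality $\Im\psi_\xi=\Im\varphi$.

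Concretely the argument runs as follows. For the forward direction: assume $J_p(\xi)$ is maximal. The Poincaré polynomial of the rational subring $\Im\psi_\xi$ is computed in \cite[Section~5]{PSZ08} from the $J$-invariant; with the maximal value one gets exactly $P(\Im\varphi,t)=\prod_i (1-t^{d_ip^{k_i}})/(1-t^{d_i})\cdot(\text{part coming from degree-}1\text{ generators})$ — but the honest bookkeeping is that the dimension in each codimension of $\Im\psi_\xi$ equals that of $\Im\varphi$. Combined with the inclusion $\Im\varphi\subseteq\Im\psi_\xi$ and the fact that both are $\zz/p$-vector spaces graded by codimension, this forces $\Im\psi_\xi=\Im\varphi$ in every degree, hence as rings. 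Conversely, if $\Im\psi_\xi=\Im\varphi$, then no $r_i^{p^j}$ with $j<k_i$ lies in $\Im\psi_\xi$ (as none lies in $\Im\varphi$, by the definition of $k_i$ as the minimal exponent making $r_i^{p^{k_i}}$ polynomial in the degree-$1$ classes), so $J_p(\xi)\ge(k_1,\dots,k_r)$ componentwise, and since $(k_1,\dots,k_r)$ is the maximum this is equality.

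The main obstacle is matching conventions: the $J$-invariant in \cite{PSZ08} is defined through a carefully chosen basis and through the \emph{ring of rational cycles}, and one must be careful that "rational cycle" there and "image of $\psi_\xi$" here denote the same subring of $\Ch(G/B)$ (they do, by the identification of $\Ch(G/B)$ with $\Ch({}_\xi(G/B)_{E})$ over a splitting field and the definition of rationality in the Notation section), and that the subalgebra generated by $\Im\varphi$ is exactly the "$J$-invariant-zero" locus. Once the dictionary $\Im\varphi=\{\text{cycles rational for the split form, equivalently for maximal }J\}$ is set up, the proof is a short comparison of Poincaré polynomials in both directions as above; I would spend most of the written proof making that dictionary precise and citing \cite[Thm.~5.13]{PSZ08} for the shape \eqref{f5} of the rational subring.
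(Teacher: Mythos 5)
Your overall route for the forward implication is the paper's: first the unconditional inclusion $\Im\varphi\subseteq\Im\psi_\xi$ (your justification via the factorization of the characteristic map through $\Ch({}_\xi(G/B))$, i.e. diagram~\eqref{dia11}, is the right one; the parenthetical ``degree $1$ cycles on a twisted flag variety are always rational since $\Pic$ descends'' is false in general, e.g. for Severi--Brauer varieties of division algebras, so drop it), and then a comparison of dimensions. The genuine gap is the anchor of that comparison. That $\dim\Im\psi_\xi$ is a function of $J_p(\xi)$ alone is \cite[Thm.~5.5]{PS10}; but you still need to know that the value of this function at the maximal $J$-invariant is exactly $\dim\Im\varphi$, and this is what you assert rather than prove. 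The formula you write down for $P(\Im\varphi,t)$ is garbled: $\prod_i(1-t^{d_ip^{k_i}})/(1-t^{d_i})$ is the Poincar\'e polynomial~\eqref{f5} of the split Rost motive, i.e. of $\Ch(G)$, not of $\Im\varphi$, and \cite[Section~5]{PSZ08} does not contain the anchored statement in the form you invoke. The paper closes this gap by exhibiting one cocycle where both sides are known: for a standard generic cocycle $\xi'$ one has $\Im\psi_{\xi'}=\Im\varphi$ by \cite[Thm.~6.4]{KM06}, and its $J$-invariant is maximal by \cite[Example~4.7]{PSZ08}; then ``dimension depends only on $J$'' transfers $\dim\Im\psi_\xi=\dim\Im\varphi$ to your $\xi$, and the inclusion upgrades this to equality of images. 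Without this generic comparison (or an explicit formula for $P(\Im\psi_\xi,t)$ in terms of $J_p(\xi)$ that visibly specializes to $P(\Im\varphi,t)$ at the maximum) your forward direction is incomplete.

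For the converse you argue from the definition of the $J$-invariant, while the paper instead quotes the strict monotonicity in \cite[Thm.~5.5]{PS10}: strictly larger values in the $\preceq$-order give strictly smaller images, so $\Im\psi_\xi=\Im\varphi$ together with the forward anchor forces maximality. Your version misreads the definition: $j_i$ is the least $j$ such that the image of the rational cycles in $\Ch(G)$ contains an element whose leading monomial is $x_i^{p^j}$, not that the literal power $r_i^{p^j}$ of a chosen lift is rational; so ``no $r_i^{p^j}$ lies in $\Im\psi_\xi$'' does not by itself give $j_i\ge k_i$. The repair stays within your approach: if $\Im\psi_\xi=\Im\varphi$, then since the positive-degree part of $\Im\varphi$ lies in the kernel of $\Ch(G/B)\to\Ch(G)$ (that kernel is the ideal it generates), the image of the rational cycles in $\Ch(G)$ is just $\zz/p$, and the absence of any positive-degree rational element there forces $j_i=k_i$ for all $i$. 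Relatedly, your characterization of $k_i$ should read: $p^{k_i}$ is the least power with $x_i^{p^{k_i}}=0$ in $\Ch(G)$, i.e. $r_i^{p^{k_i}}$ lies in the ideal generated by the positive-degree part of $\Im\varphi$ --- not necessarily in the subring $\Im\varphi$ itself.
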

\begin{proof}
The inclusion $\Im\varphi\subset\Im\psi_\xi$ holds in general and is proven in \cite[Thm.~6.4]{KM06}, see also \cite{Gr58}.

For the opposite inclusion, it follows from \cite[Thm.~5.5]{PS10} that the dimension of the image of $\psi_\xi$ (over the field $\zz/p$)
depends only on the value of the $J$-invariant of $\xi$.
By \cite[Thm.~6.4]{KM06} there is a cocycle $\xi'$ (a generic cocycle) such that
$\Im\varphi=\Im\psi_{\xi'}$ and by \cite[Example~4.7]{PSZ08} its $J$-invariant is maximal. It follows that
$\dim\Im\psi_\xi=\dim \Im\psi_{\xi'}=\dim\Im\varphi$ and, thus, $\Im\varphi=\Im\psi_\xi$.

By \cite[Thm.~5.5]{PS10} strictly bigger values
of the $J$-invariant (in the $\preceq$-order of \cite[Section~4.5]{PSZ08}) correspond to strictly smaller dimensions of the image of $\psi_\xi$.
This shows the converse implication. 
\end{proof}

\begin{rem}
The image of $\varphi$ is combinatorial and can be described in terms of the root data of
the split group $G$ only.
\end{rem}

\begin{thm}\label{tlift}
Let $G$ be a split semisimple algebraic group of rank $l$ over a field $F$. Let $e_i$ be the degrees
of the fundamental polynomial invariants of $G$, and $p,r,d_i,k_i$ are the (combinatorial) parameters attached
to the $J$-invariant modulo $p$. Then we have the following equivariant motivic decomposition
\begin{align}\label{motdec2}
\Mot_G(G/B)\otimes\zz/p\simeq\bigoplus_{i\ge 0} R_{p,G}(G)\{i\}^{\oplus c_i},
\end{align}
where $R_{p,G}(G)$ is an indecomposable motive
and the integers $c_i$ satisfy
$$\sum_{i\ge 0}c_it^i=\prod_{i=1}^l\frac{1-t^{e_i}}{1-t}\prod_{j=1}^r\frac{1-t^{d_j}}{1-t^{d_jp^{k_j}}}.$$
The Poincar\'e series of the (equivariant) motive $R_{p,G}(G)$ equals
\begin{align}\label{f7}
\frac{1}{(1-t)^l}\cdot\prod_{j=1}^r\frac{1-t^{d_jp^{k_j}}}{1-t^{d_j}}\cdot\prod_{i=1}^l\frac{1-t}{1-t^{e_i}}=
\prod_{i=1}^l\frac{1}{1-t^{d_{i,p}}}
\end{align}

If $\xi$ is a $1$-cocycle with maximal $J$-invariant mod $p$, then over a splitting field $L$ of $\xi$
the equivariant motive $R_{p,G}(G)$ projects to the ordinary motive $(R_p(\xi))_L$.
\end{thm}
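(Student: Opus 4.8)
The plan is to apply Theorem~\ref{t1} to the variety $G/B$ (so $P = B$, which is special), and then to identify the resulting equivariant pieces with the ones described in \eqref{motdec2}. First I would verify the hypothesis of Theorem~\ref{t1}: the projectors $\pi_i$ appearing in the motivic decomposition \eqref{motdec} of ${}_\xi(G/B)$ must lie in the image of $\rho\colon\CH_G(G/B\times G/B)\otimes\zz/p\to\CH(G/B\times G/B)\otimes\zz/p$. By Lemma~\ref{maxj}, when $\xi$ has maximal $J$-invariant the image of the restriction map $\psi_\xi\colon\Ch({}_\xi(G/B))\to\Ch(G/B)$ equals the image of the characteristic map $\varphi\colon\Ch(BB)\to\Ch(G/B)$, which is precisely the ``rational'' (equivariant) part; dually, the same holds for $G/B\times G/B$ since it decomposes, via the Chernousov--Merkurjev method, into equivariant motives of products $G/P'$ with $P'$ special. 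Hence the projectors cutting out the $R_p(\xi)\{i\}$ summands are rational, i.e. lie in $\Im\rho$, so Theorem~\ref{t1} applies and lifts the decomposition \eqref{motdec} to an equivariant one $\Mot_G(G/B)\otimes\zz/p\simeq\bigoplus_i\widetilde{R_p(\xi)}\{i\}^{\oplus c_i}$, with the $c_i$ unchanged and the Tate-twist relations preserved by the last clause of Theorem~\ref{t1}. Setting $R_{p,G}(G):=\widetilde{R_p(\xi)}$ then gives \eqref{motdec2}, and the final clause of the present theorem — that $R_{p,G}(G)$ projects over a splitting field $L$ of $\xi$ to $(R_p(\xi))_L$ — is immediate from $\rho(\widetilde{\pi_i})=\pi_i$ together with the compatibility of $\rho$ with base change to $L$.

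Next I would establish the two Poincaré series identities. The coefficient identity $\sum_i c_i t^i = \prod_{i=1}^l\frac{1-t^{e_i}}{1-t}\cdot\prod_{j=1}^r\frac{1-t^{d_j}}{1-t^{d_jp^{k_j}}}$ is purely a restatement of the defining relation $\sum_i c_i t^i = P(\Ch(G/B),t)/P(\Ch(R_p(\xi)),t)$ from \eqref{motdec}, combined with the formulas \eqref{f5} and \eqref{f6}. For the Poincaré series of $R_{p,G}(G)$ itself: over a splitting field the equivariant Chow ring $\CH_G(G/B) = \CH(BB) = S(\widehat T)$ is a polynomial ring with Poincaré series $1/(1-t)^l$, and from the equivariant decomposition \eqref{motdec2} one gets $\frac{1}{(1-t)^l} = P(R_{p,G}(G),t)\cdot\sum_i c_i t^i$, whence $P(R_{p,G}(G),t) = \frac{1}{(1-t)^l}\cdot\frac{1}{\sum_i c_i t^i}$, which is exactly the left-hand side of \eqref{f7} after substituting the formula for $\sum_i c_i t^i$ just obtained. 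The remaining assertion — that this rational function equals $\prod_{i=1}^l\frac{1}{1-t^{d_{i,p}}}$ with $d_{i,p}$ the Kac degrees of generators of $I_p$ listed in Table~\ref{jinv} — is the combinatorial heart of the matter: one must check, type by type (or by invoking Kac's description in \cite{Kac85}), that
$$\frac{\prod_{j=1}^r(1-t^{d_jp^{k_j}})\prod_{i=1}^l(1-t)}{(1-t)^l\prod_{j=1}^r(1-t^{d_j})\prod_{i=1}^l(1-t^{e_i})} = \prod_{i=1}^l\frac{1}{1-t^{d_{i,p}}}.$$
Equivalently, $\prod_{i=1}^l(1-t^{d_{i,p}}) = \prod_{i=1}^l(1-t^{e_i})\cdot\prod_{j=1}^r\frac{1-t^{d_j}}{1-t^{d_jp^{k_j}}}$, which reflects the fact — going back to Kac — that the ideal $I_p$ has generators in degrees $d_{i,p}$ obtained from the fundamental invariant degrees $e_i$ by the modular corrections recorded in the $d_j,k_j$ columns.

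I expect the main obstacle to be precisely this last verification of the Kac-degree identity. In principle it is a bookkeeping check against Table~\ref{jinv} and Table~2 of \cite{Kac85}, but doing it uniformly is delicate: for each group one must match the multiset $\{e_i\}$ against $\{d_{i,p}\}$ under the correction by $\{d_j\to d_jp^{k_j}\}$, and confirm that the rational function is in fact a product of $l$ geometric factors $1/(1-t^{d_{i,p}})$ with positive integer exponents summing correctly (so that $R_{p,G}(G)$ has the shape of a Chow ring of a classifying space, as the applications in Sections~\ref{sec6}--\ref{sec7} demand). I would handle the classical types ($\SL_n/\mu_m$, $\PGSp_n$, $\OO^+_n$, $\Spin_n$, $\PGO_{2n}^+$, $\Spin_{2n}^\pm$) by direct manipulation — there the $e_i$ are arithmetic progressions and the $d_j$ are odd numbers, so the cancellation is transparent — and treat the exceptional cases by explicit inspection of the relevant rows of Table~\ref{jinv}. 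The lifting part (Theorem~\ref{t1} plus Lemma~\ref{maxj}) I expect to be routine; it is the combinatorial identification with Kac's degrees that carries the content and justifies calling $d_{i,p}$ ``the degrees for generators of $I_p$''.
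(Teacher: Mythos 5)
Your argument is essentially the paper's own proof: the lifting is carried out via Theorem~\ref{t1}, with rationality of the projectors of~\eqref{motdec} (for maximal $J$-invariant) established through the Chernousov--Merkurjev identification together with Lemma~\ref{maxj}, and the Poincar\'e series obtained by dividing $P(\Ch_G(G/B),t)=\frac{1}{(1-t)^l}$ (since $\Ch_G(G/B)=\Ch(BB)=S(\widehat T)\otimes\zz/p$ is a polynomial ring in $l$ variables) by $\sum_i c_it^i$. The only deviations are minor: you omit the one-line observation that $R_{p,G}(G)$ is indecomposable because $R_p(\xi)$ is, and for the second equality in~\eqref{f7} the paper relies on Kac's result that $I_p$ is generated in the degrees $d_{i,p}$ (so the identity needs no type-by-type verification), whereas you propose a case check as a fallback.
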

\begin{proof}
We will apply Thm.~\ref{t1}. Namely, we will check that the projectors corresponding to motives in decomposition~\eqref{motdec}
(over a splitting field) lie in the image of the homomorphism $\rho$ of Thm.~\ref{t1}

Consider the following commutative diagram, where the horizontal isomorphisms arise from the
Chernousov--Merkurjev method described in \cite[Section~6]{GPS12}:

\[
 \xymatrix{
 \Ch_G(G/B\times G/B) \ar@{<-}[r]^{\quad\simeq}_{f_G} \ar@{->}[d]^{\rho}  & \bigoplus\Ch_G(G/B)\ar@{->}[d]^{\rho'}    \\
 \Ch(G/B\times G/B) \ar@{<-}[r]^{\quad\simeq}_f & \bigoplus\Ch(G/B)
 }
 \]
(we do not write the precise parameters of $\bigoplus$, since they play no role in the proof).
Observe that $\Ch_G(G/B)=\Ch(BB)=S(\widehat T)\otimes\zz/p$.

Let $L$ be a splitting field of $\xi$.
For the motive $R_p(\xi)=({}_\xi(G/B),\pi)$ the respective projector $\pi_L$ is defined over the
base field. Since the homomorphism $f$ preserves rationality of cycles, the respective element in $\bigoplus\Ch(G/B)$
is also defined over the base field.

But the value of the $J$-invariant of $\xi$ is maximal. Therefore by Lemma~\ref{maxj} the subspace of rational cycles
in $\Ch(G/B)$ coincides with the image of $\rho'$. Thus, the projector $\pi_L$ lies in the image of $\rho$
and we can apply theorem~\ref{t1} and construct an equivariant motive $R_{p,G}(G)$.

The motive $R_{p,G}(G)$ is indecomposable, since the motive $R_p(\xi)$ is indecomposable.

Next we compute the equivariant Poincar\'e series.
Passing to the equivariant Chow groups we have $\Ch_G(G/B)=\Ch(BB)=S(\widehat T)\otimes\zz/p$, which is the algebra of polynomials
in $l$ variables. Thus, the Poincar\'e series of $\Ch_G(G/B)$ equals $\dfrac{1}{(1-t)^l}$.

Now it remains to observe that by decomposition~\eqref{motdec2} $P(\Ch_G(R_{p,G}(G)),t)$ equals
$$\frac{P(\Ch_G(G/B),t)}{\sum_{i\ge 0} c_it^i}.$$
\end{proof}

\begin{example}\label{ex1}
We compute the Poincar\'e series for groups of types $\G_2\mod 2$, $\F_4\mod 3$, and $\E_8\mod 5$.
Substituting in formula~\eqref{f7} the respective parameters $p$, $k_i$, $d_i$ etc. we obtain
$$P(R_{2,\G_2}(\G_2),t)=\frac{1}{(1-t^2)(1-t^3)}$$
$$P(R_{3,\F_4}(\F_4),t)=\frac{1}{(1-t^2)(1-t^4)(1-t^6)(1-t^8)}$$
$$P(R_{5,\E_8}(\E_8),t)=\frac{1}{(1-t^2)(1-t^6)(1-t^8)(1-t^{12})(1-t^{14})(1-t^{18})(1-t^{20})(1-t^{24})}$$

Observe that $P(R_{2,\G_2}(\G_2),t)=P(\Ch(B\SL_3),t)$, since by \cite{To99} $$\Ch(B\SL_3)=\zz/2[x_2,x_3]$$ with $\deg x_i=i$,
and $P(R_{3,\F_4}(\F_4),t)=P(\Ch(B\Spin_9))$, since $$\Ch(B\Spin_9)=\zz/3[y_2,y_4,y_6,y_8]$$ with $\deg y_i=i$.
(Notice that the prime $3$ is not a torsion prime for $\Spin_9$ and so we have the same formula for
$\Ch(B\Spin_9)$ as for $\CH\otimes \qq$. But for any split semisimple group $G$ one has $\CH(BG)\otimes\qq=\qq[x_i, i=1,\ldots,l]$,
where $l$ is the rank of $G$ and $\deg x_i=e_i$ the degrees of fundamental polynomial invariants.)

Besides from this, the codimensions $2,3$ for $\G_2$, $2,4,6,8$ for $\F_4$, and $2,6,8,12,14,18,20,24$ for $\E_8$
are precisely the codimensions (excluded codimension $0$), where the Chow group of the respective ordinary
motive $R_p(\xi)$ is non-zero (see \cite[Thm.~8.1]{KM02}, \cite[Thm.~RM.10]{KM13}, \cite[Cor.~10.8]{Ya12},
\cite[Section~4.1]{Vi07}). These numbers are also the degrees of generators given in the last column of Table~\ref{jinv}.

In the next sections we will give an explanation of these two amazing phenomena.
\end{example}

\section{Equivariant motives of types $\G_2$ and $\F_4$}\label{sec6}
We start explaining the first phenomena, namely, that some equivariant Rost motives can be identified with
the classifying spaces of algebraic groups ($\SL_3$ mod $2$ and $\Spin_9$ mod $3$).

Consider the following diagram of localization sequences (see also diagram~\ref{dia11}):
\[
 \xymatrix{
 \CH_{\G_2}(\G_2/P_1) \ar@{->}[r] \ar@{->}[d] &\CH_{\G_2}(\Spin_7/P_3) \ar@{->}[d] \ar@{->}[r] &\CH_{\G_2}(\G_2/\SL_3) \ar@{->}[d] \ar@{->}[r]& 0\ar@{->}[d]\\
 \CH(E/P_1) \ar@{->}[r] & \CH(X)\ar@{->}[r] & \CH(E/\SL_3)\ar@{->}[r] & 0
 }
 \]
where $E$ is a $\G_2$-torsor over $F$ and $X$ is the respective twisted form of $\Spin_7/P_3$ (a $6$-dimensional
Pfister quadric); see Section~\ref{secaff} and Thm.~\ref{tg2}.

By Thm.~\ref{tg2} $E/\SL_3$ is the (ordinary) Rost motive for a $3$-symbol mod $2$. So, by analogy,
$\CH_{\G_2}(\G_2/\SL_3)$ is the Chow group of an equivariant split Rost motive. But
$\CH_{\G_2}(\G_2/\SL_3)=\CH(B\SL_3)=\zz[x_2,x_3]$, where $\deg x_i=i$. This explains the coincidence of
the equivariant Rost motive and $B\SL_3$.

The ring $\CH_{\G_2}(\G_2/P_1)$ equals $\CH(BP_1)=\CH(BL_1)$, where $L_1$ is the Levi part of the parabolic subgroup $P_1$,
so is isomorphic to $\GL_2$. Thus, $$\CH_{\G_2}(\G_2/P_1)=\CH(B\GL_2)=\zz[y_1,y_2],$$ where $\deg y_i=i$
(see \cite[Sec.~15]{To99}).

The ordinary Chow motive of $E/P_1$ is a direct sum of $3$ ordinary Rost motives:
$$\Mot(E/P_1)=R_{3,2}\oplus R_{3,2}\{1\}\oplus R_{3,2}\{2\}.$$
Exactly as in Thm.~\ref{tlift} we can lift this decomposition to a decomposition for $B\GL_2$.
The image of the (injective) pullback $\CH(B\SL_3)\to\CH(B\GL_2)$ of the natural embedding $\GL_2\to\SL_3$
equals $\zz[y_1^2-y_2,y_1y_2]$. One can show that the respective equivariant decomposition looks as follows:
$$\CH(B\GL_2)=\CH(B\SL_3)\oplus y_1\CH(B\SL_3)\oplus y_1^2\CH(B\SL_3),$$
where we identify $\CH(B\SL_3)$ with its image in $\CH(B\GL_2)$. In particular,
$$y_1^3=y_1y_2+y_1(y_1^2-y_2)$$ and we have even an isomorphism of rings
$$\CH(B\GL_2)\simeq\CH(B\SL_3)[t]/(t^3-tx_2-x_3)$$

The same arguments can be applied to $\F_4$. Namely, one can consider the sequence modulo $3$
$$\Ch_{\F_4}(\F_4/P_4)\to\Ch_{\F_4}(\E_6/P_1)\to\Ch_{\F_4}(\F_4/\Spin_9)$$

By Thm.~\ref{tf4} $\F_4/\Spin_9$ is the Rost motive for a $3$-symbol mod $3$. So, $\Ch_{\F_4}(\F_4/\Spin_9)=\Ch(B\Spin_9)$
is its equivariant analog. In the same way, one can interpret other cases described in Section~\ref{secaff}.
E.g., the equivariant analog for the Rost motive of a $2$-symbol mod $p$ is the space $B\GL_{p-1}$.

\section{Torsion in Chow groups and Rost motives over a general base}\label{sec7}
In this section we explain the second phenomena of example~\ref{ex1} related to torsion in Chow groups.
We will give a geometric interpretation of the numbers in the last column of Table~\ref{jinv}.

In \cite[Thm.~1]{Za07} Zainoulline gives a formula for the canonical dimension of a split simple algebraic group $G$
of rank $l$. Namely, he shows that
$$\cdim_p(G)=N+l-(d_{1,p}+d_{2,p}+\ldots+d_{l,p})=\sum_{i=1}^l e_i-\sum_{i=1}^l d_{i,p},$$
where $N$ is the number of positive roots of $G$ and the integers $d_{i,p}$ are the numbers from the
last column of Table~\ref{jinv} (In \cite{Za07} this result is formulated for odd primes $p$, but proven
for all primes $p$). Notice also that $d_{i,p}=e_i$, if $p$ is not a torsion prime of $G$.

On the other hand, there is another formula for the canonical $p$-dimension of $G$ given in \cite[Cor.~4]{Za07} and
in \cite[Prop.~6.1]{PSZ08}.
Namely, $$\cdim_p(G)=\sum_{i=1}^r d_i(p^{k_i}-1),$$
where the numbers $d_i$, $k_i$ are taken from the previous columns of Table~\ref{jinv}. In particular,
this formula does not involve the numbers from the last column of Table~\ref{jinv}.

Let $G$ be a split semisimple algebraic group over a field $F$ with a split maximal torus $T$
and a Borel subgroup $B$ containing $T$. Fix a prime number $p$ and
consider the characteristic map $S(\widehat T)\otimes\zz/p=\Ch(BB)\xrightarrow{\varphi}\Ch(G/B)$. The kernel of this ring homomorphism is denoted by $I_p$.

As before the ring $S(\widehat T)$ is a polynomial ring and the numbers $d_{i,p}$ are degrees of its generators described
in \cite{Kac85}. Following \cite{Kac85} we denote these generators by $P_1,\ldots,P_l$.

A sequence $A^*\xrightarrow{f} B^*\xrightarrow{g} C^*$ of graded rings is called {\it right exact}, if the homomorphism
$g$ is surjective and $\Ker g$ is the ideal generated by $f(A^+)$, where $A^+$ is the ideal of $A$ generated by
the elements of $A^*$ of strictly positive degree. Equivalently, this means that $g$ induces an
isomorphism
$$
B\otimes_A A/A^+\simeq C.
$$

\begin{lem}\label{tors}
Let $G$ be a linear algebraic group over a field $F$,
$S$ be a special split reductive subgroup of $G$ and $E$ be a $G$-torsor over a smooth base scheme $Z$.
Then the sequence $$\CH^*(BS)\to\CH^*(E/S)\to\CH^*(E)$$
is right exact.
\end{lem}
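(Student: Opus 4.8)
The statement is essentially a relative version, over a base scheme $Z$, of the standard presentation of the Chow ring of a quotient $E/S$ in terms of $\CH^*(BS)$. The plan is to deduce it from the fibration $S\to E\to E/S\to BS$ together with the fact that $S$ is special. First I would use that $S$ is special and split reductive to identify $\CH^*(BS)$ with $S(\widehat{T_S})^{W_S}$ (the $W_S$-invariants of the symmetric algebra of the character group of a maximal torus of $S$); more to the point, for the right-exactness we only need the two maps in the sequence, so the concrete description of $\CH^*(BS)$ is a convenience rather than a necessity. The first map $\CH^*(BS)\to\CH^*(E/S)$ is the pullback along the classifying map $E/S\to BS$ (equivalently, it is the composite $\CH_S(\pt)=\CH_G(G/S)\to\CH(E/S)$ appearing in diagram~\eqref{dia11}), and the second map $\CH^*(E/S)\to\CH^*(E)$ is the pullback along $E\to E/S$.

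The key step is the surjectivity of $\CH^*(E/S)\to\CH^*(E)$ together with the computation of its kernel. For surjectivity: since $S$ is special, the torsor $E\to E/S$ is Zariski-locally trivial, so $E$ is an $S$-bundle over $E/S$; moreover a special reductive group has trivial Chow ring (it is an iterated extension of $\GL_n$'s and $\mathbb{G}_a$'s, whence $\CH^*(S)=\zz$ — this is where ``special reductive'' really enters, cf. Lemma~\ref{lgen} for the $\GL_n$-type argument). Then I would run the localization/homotopy-invariance argument for the fibration $E\to E/S$: filtering $E/S$ and using homotopy invariance of Chow groups together with $\CH^*(S)=\zz$, the pullback $\CH^*(E/S)\to\CH^*(E)$ is surjective, exactly as in the proof of Lemma~\ref{lgen} but now with $E/S$ in place of $\Spec F$. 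For the kernel, I would invoke the standard fact (e.g. the Edidin--Graham / projective-bundle-type argument used in \cite{EG98}) that for a Zariski-locally trivial $S$-torsor with $S$ special reductive, $\CH^*(E)\simeq\CH^*(E/S)\otimes_{\CH^*(BS)}\zz$, where $\zz=\CH^*(BS)/\CH^*(BS)^+$: concretely, one passes to the approximation $E\times^S V$, $V$ an open in a representation with free $S$-action and large codimension complement, uses that $\CH_S$ of a point is $\CH$ of this approximation in a range of degrees, and the relative version of the Leray--Hirsch principle. Unwinding this isomorphism says precisely that the kernel of $\CH^*(E/S)\to\CH^*(E)$ is the ideal generated by the image of $\CH^*(BS)^+$, which is the definition of right exactness.

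Two technical points need care, and the main obstacle is the second. First, all of this must be carried out over the base scheme $Z$ rather than over a field: homotopy invariance and the localization sequence for Chow groups hold over arbitrary (regular, or at least smooth-over-a-field) bases, and $E/S\to Z$ is smooth since $E\to Z$ is a torsor under a smooth group and $E\to E/S$ is an $S$-bundle, so the cited results apply verbatim; I would just remark that the constructions of $BS$, equivariant Chow groups, and the approximation spaces all make sense relatively over $Z$ (as in \cite{EG98}, \cite{To99}). Second — and this is the real content — one must check that the ``Leray--Hirsch over $\CH^*(BS)$'' description of $\CH^*(E)$ is valid for a general special reductive $S$ and not just a torus or $\GL_n$; the clean way is to reduce to those cases by dévissage along a chain of normal subgroups of $S$ (special reductive groups are built from $\GL_n$'s and unipotent pieces), at each stage using either a projective-bundle formula, the computation $\CH^*(\GL_n/\text{stuff})$ from Lemma~\ref{lgen}, or homotopy invariance to kill the unipotent radical contributions. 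Once that reduction is set up, each individual step is routine, so I expect the bookkeeping of the dévissage — making sure the right-exactness is preserved at each stage and that the generators of $\CH^*(BS)^+$ are accounted for — to be where the proof actually spends its effort.
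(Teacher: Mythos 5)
The heart of your argument is the identity $\CH^*(E)\simeq\CH^*(E/S)\otimes_{\CH^*(BS)}\zz$, but that identity \emph{is} the lemma, so invoking it as a ``standard fact'' from \cite{EG97}, \cite{EG98} is circular: those references give it (in effect) for tori and $\GL_n$, i.e.\ for vector bundles and their associated torsors, not for an arbitrary special reductive $S$ over a general smooth base. You do recognize that this is where the content lies, but the reduction you propose rests on a false structural claim: special reductive groups are \emph{not} iterated extensions of $\GL_n$'s and unipotent pieces. By the Serre--Grothendieck classification the special reductive groups include $\SL_n$ and $\Sp_{2n}$ (and products/quotients thereof with tori), which are semisimple and admit no chain of normal subgroups with $\GL$- or unipotent quotients; so a d\'evissage ``along a chain of normal subgroups of $S$'' never gets off the ground for, say, $S=\Sp_{2n}$, and the subsidiary claim that $\CH^*(S)=\zz$ because $S$ is built from $\GL_n$'s and $\mathbb{G}_a$'s is justified by the same false premise (the conclusion happens to be true for special groups, but not for that reason). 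Likewise, the surjectivity of $\CH^*(E/S)\to\CH^*(E)$ ``by filtering $E/S$ and homotopy invariance'' needs an actual argument that $\CH^*(U)\to\CH^*(U\times S)$ is onto for the relevant fibers, which is not just homotopy invariance.

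The paper's proof fills exactly this gap by reducing to a maximal torus rather than devissaging $S$ itself: first the torus case is done by your vector-bundle/localization argument (embed $T\to\GL_n$, pass to the associated bundle $E'$ over $E/T$ and use the localization sequence); then, for general special $S$ with split maximal torus $T$, one uses that $E/T\to E/S$ is a Zariski-locally trivial fibration with cellular fiber $S/T$ (right exactness of $\CH^*(E/S)\to\CH^*(E/T)\to\CH^*(S/T)$ by \cite[Prop.~1]{EG97}) together with Demazure's theorem that $\CH^*(BS)=\CH^*(BT)^{W_S}$ and $\CH^*(BT)$ is a free $\CH^*(BS)$-module of rank $|W_S|$ with $\CH^*(BT)\otimes_{\CH^*(BS)}\zz\simeq\CH^*(S/T)$. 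Comparing free $\CH^*(E/S)$-modules of the same rank $|W_S|$ gives $\CH^*(E/T)\simeq\CH^*(E/S)\otimes_{\CH^*(BS)}\CH^*(BT)$, and tensoring with $\zz$ over $\CH^*(BT)$ and applying the torus case yields the lemma. If you want to salvage your plan, replace the normal-subgroup d\'evissage by this torus reduction (or by the standard fibration arguments $\SL_n\curvearrowright\mathbb{A}^n\setminus\{0\}$, $\Sp_{2n}\curvearrowright\mathbb{A}^{2n}\setminus\{0\}$, which is essentially how $\CH^*(B\SL_n)$ and $\CH^*(B\Sp_{2n})$ are computed), but some such input is indispensable.
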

\begin{proof}
First we consider the case when $S={\mathbb G}_m^n$ is a split torus.
Consider the $G$-torsor $E$ over $Z$ as an $S$-torsor over $X=E/S$. In particular,
it is given by a $1$-cocycle from $Z^1(X,{\mathbb G}_m^n)$. We embed $S\to\GL_n$ and
consider the respective vector bundle $E'$ over $X$.

Then $E$ is an open subvariety of $E'$ with the complement isomorphic to a union
of closed subvarieties isomorphic to $X$, and $\CH^*(X)\simeq\CH^*(E')$.
Then the localization sequence for Chow groups implies the claim in this situation.

We return now to the general situation.
Denote by $T$ a split maximal torus of $S$. We claim that
\begin{equation}\label{fes}
\CH^*(E/S)\otimes_{\CH^*(BS)}\CH^*(BT)=\CH^*(E/T)
\end{equation}

There are right exact sequences
$$\CH^*(E/S)\to\CH^*(E/T)\to\CH^*(S/T)$$
and
$$\CH^*(BS)\to\CH^*(BT)\to\CH^*(S/T)$$
Indeed, since $S$ is special, $E/T\to E/S$ is a Zariski trivial fibration with a cellular fiber $S/T$.
Therefore the first sequence is right exact by \cite[Prop.~1]{EG97}. As for the second one,
using \cite[Lemma~4]{EG97} we see that $\CH^*(BS)=\CH^*(BT)^{W_S}$, where $W_S$
is the Weyl group of $S$. The rest follows from the results of \cite{De73}.

Now we see that the homomorphism from the right to the left in formula~\eqref{fes} is surjective.
But the left- and the right-hand sides of formula~\eqref{fes} are free $\CH^*(E/S)$-modules
of the same rank $|W_S|$. Therefore formula~\eqref{fes} follows.

Applying $-\otimes_{\CH^*(BT)}\zz$ to formula~\eqref{fes} and using that by the above
considerations $\CH^*(E/T)\otimes_{\CH^*(BT)}\zz=\CH^*(E)$, we obtain that
$\CH^*(E)=\CH^*(E/S)\otimes_{\CH^*(BS)}\zz$, as required.
\end{proof}

Let $E$ be a $G$-torsor over $F$ and consider the following commutative diagram of graded rings:
\begin{align*}
 \xymatrix{
 &\Ch(BB) \ar@{=}[d] \ar@{->}[r]^{\varphi_E} &\Ch(E/B) \ar@{->}[d]^{\res} \ar@{->}[r]^{f_E}& \Ch(E)\ar@{->}[d]\\
I_p \ar@{->}[r]& \Ch(BB) \ar@{->}[r]^{\varphi} & \Ch(G/B)\ar@{->}[r] & \Ch(G)
 }
\end{align*}
Assume that $\Ch^*(E)=\zz/p$. Then Lemma~\ref{maxj}, Lemma~\ref{tors} and the diagram above imply
that $\varphi_E$ is surjective, and the $J$-invariant of $E$ is maximal.

Consider a more general situation when $E$ is a $G$-torsor
over a smooth base $Z$ and $\Ch^*(E)=\zz/p$. The map from $E/B$ to $E/B\times_Z E\simeq G/B\times E$ induces
a restriction map
$$
\res\colon\Ch^*(E/B)\to\Ch^*(G/B\times E)=\Ch^*(G/B)\otimes_{\zz/p}\Ch^*(E)
=\Ch^*(G/B),
$$
the isomorphism in the middle is by \cite[Lemma~6.1]{To99}.
We have a diagram
\begin{align}\label{di}
 \xymatrix{
 &\Ch(BB) \ar@{=}[d] \ar@{->>}[r]^{\varphi_E} &\Ch(E/B) \ar@{->}[d]^{\res}\\
I_p \ar@{->}[r]& \Ch(BB) \ar@{->}[r]^{\varphi} & \Ch(G/B)
 }
\end{align}

\begin{lem}\label{kern}
Let $G$ be a split semisimple algebraic group over a field $F$ and $E$ be a $G$-torsor over a smooth base $Z$.
Assume $\Ch(E)=\zz/p$. Then in the above notation $\Ker\res=\varphi_E(I_p)$.
\end{lem}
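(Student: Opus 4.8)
The plan is to compare the two horizontal maps in diagram~\eqref{di} by a base-change argument reducing to the case $Z=\Spec F$, and then to extract the equality of kernels from the right exactness established in Lemma~\ref{tors}. Concretely, $\Ch(E/B)$ and $\Ch(G/B)$ are both quotients of $\Ch(BB)$ (via $\varphi_E$ and $\varphi$ respectively — here $\varphi_E$ is surjective because, by Lemma~\ref{tors} applied to $S=B$ together with the hypothesis $\Ch(E)=\zz/p$, the sequence $\Ch(BB)\to\Ch(E/B)\to\Ch(E)=\zz/p$ is right exact, so the ideal generated by $\varphi_E(\Ch(BB)^+)$ is all of $\Ch(E/B)^+$, and a graded connected surjection in degree zero forces $\varphi_E$ itself to be surjective). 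Thus $\Ch(E/B)=\Ch(BB)/\Ker\varphi_E$ and $\Ch(G/B)=\Ch(BB)/I_p$, and the map $\res$ is the canonical projection between these two quotients. Therefore $\res$ is surjective and $\Ker\res=\Ker\varphi/\Ker\varphi_E$, so the statement $\Ker\res=\varphi_E(I_p)$ is equivalent to the inclusion $\Ker\varphi_E\subseteq I_p=\Ker\varphi$.

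First I would treat the case $Z=\Spec F$, already recorded in the excerpt just before Lemma~\ref{kern}: there $f_E$ composes with $\res$ to fit into the displayed commutative diagram, $\Ch(E)=\zz/p$ forces (via Lemma~\ref{maxj} and Lemma~\ref{tors}) that $\varphi_E$ is surjective and the $J$-invariant of $E$ is maximal, whence by Lemma~\ref{maxj} the image of $\res=\psi_\xi$ equals $\Im\varphi$; combined with surjectivity of $\varphi_E$ this gives exactly $\Ker\res=\varphi_E(I_p)$. For the general base $Z$, I would not repeat this; instead I would observe that the formation of $\Ch(BB)$, $\Ch(G/B)$, and the characteristic map $\varphi$ is insensitive to the base (these are combinatorial in the split group $G$), while $\Ch(E/B)$ over $Z$ and the map $\res$ only depend on the $G$-torsor $E$. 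The key point is that $\res$ was defined via $E/B\to E/B\times_Z E\simeq G/B\times E$ and the identification $\Ch(G/B\times E)\simeq\Ch(G/B)\otimes_{\zz/p}\Ch(E)=\Ch(G/B)$ using $\Ch(E)=\zz/p$ and \cite[Lemma~6.1]{To99}. I would check that under this identification $\res\circ\varphi_E=\varphi$, i.e. the triangle in~\eqref{di} commutes; this is a diagram chase in equivariant Chow groups, pulling the characteristic class construction back along $E/B\to BB$ in two compatible ways.

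Given commutativity of~\eqref{di} and surjectivity of both $\varphi$ and $\varphi_E$, the equality $\Ker\res=\varphi_E(I_p)$ is then purely formal: $\varphi_E(I_p)\subseteq\Ker\res$ because $\res\varphi_E(I_p)=\varphi(I_p)=0$; conversely, given $y\in\Ker\res\subseteq\Ch(E/B)$, surjectivity of $\varphi_E$ gives $y=\varphi_E(x)$ for some $x\in\Ch(BB)$, and then $\varphi(x)=\res\varphi_E(x)=\res(y)=0$, so $x\in I_p$ and $y=\varphi_E(x)\in\varphi_E(I_p)$. The only genuine input beyond formalities is that $\varphi_E$ is surjective, which is supplied by Lemma~\ref{tors} with $S=B$ (a special split solvable subgroup) and the hypothesis $\Ch(E)=\zz/p$, exactly as in the paragraph preceding the lemma.

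I expect the main obstacle to be verifying the commutativity of the triangle in~\eqref{di} over a general base $Z$, i.e. that the restriction map $\res\colon\Ch(E/B)\to\Ch(G/B)$ coming from $E/B\to G/B\times E$ really does carry $\varphi_E$ to the combinatorial characteristic map $\varphi$. This requires unwinding how the equivariant/characteristic classes on $E/B$ (pulled back from $BB$) restrict to the generic fiber $G/B$, and using that Chow groups of split flag varieties and of classifying spaces are base-independent so that the ``$\varphi$ over $Z$'' literally equals ``$\varphi$ over the prime field''. Once that compatibility is in hand, together with the surjectivity of $\varphi_E$ from Lemma~\ref{tors}, the conclusion follows by the formal argument above with no further computation.
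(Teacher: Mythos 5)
Your proposal is correct and follows essentially the paper's own route: the paper's proof is literally ``Clear from the diagram above,'' i.e.\ the commutativity of diagram~\eqref{di} together with the surjectivity of $\varphi_E$ (supplied by Lemma~\ref{tors} and $\Ch(E)=\zz/p$, as in the paragraphs preceding the lemma) gives exactly the formal kernel chase you spell out ($y=\varphi_E(x)$, $\varphi(x)=\res(y)=0$, so $y\in\varphi_E(I_p)$, and conversely $\res\varphi_E(I_p)=\varphi(I_p)=0$). One minor technical point: Lemma~\ref{tors} is stated for a special split \emph{reductive} subgroup, so rather than invoking it with $S=B$ you should apply it with $S=T$ and use $\Ch(BB)=\Ch(BT)$ and $\Ch(E/B)\cong\Ch(E/T)$ (homotopy invariance for the affine fibration $E/T\to E/B$); this does not affect the rest of your argument.
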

\begin{proof}
Clear from the diagram above.
\end{proof}

The following lemma follows from \cite[Thm.~4.6]{Ya11}, Lemma~\ref{lgen} and \cite{PSZ08}.
\begin{lem}
Let $G$ be a split semisimple group and let $E$ be a $G$-torsor over a field.
Then $\Ch(E)=\zz/p$ in the following cases:
\begin{itemize}
\item $E$ is a standard generic torsor,
\item or $G$ is a simple group of type $\Phi$, $(\Phi,p)$ equals $(\G_2,2)$, $(\F_4,2)$, $(\F_4,3)$ or $(\E_7,3)$,
and the torsor $E$ is non-trivial.
\end{itemize}
In all these cases the $J$-invariant of $E$ mod $p$ takes its maximal possible value.
\end{lem}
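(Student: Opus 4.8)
The plan is to split into the two cases exactly as stated and then, in each case, deduce the claim $\Ch(E)=\zz/p$ from an already-available computation, followed by an application of the diagram reasoning preceding Lemma~\ref{kern} (together with Lemma~\ref{maxj}) to conclude that the $J$-invariant is maximal. For the first bullet, $E$ a standard generic torsor, I would simply invoke Lemma~\ref{lgen}: there we proved $\CH^*(E)=\zz$ for a standard generic $G$-torsor, and reducing mod $p$ gives $\Ch^*(E)=\zz/p$. This is immediate and requires no further input.

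For the second bullet, the content is the statement that for a \emph{non-trivial} torsor $E$ under a simple group $G$ of one of the listed types $(\Phi,p)\in\{(\G_2,2),(\F_4,2),(\F_4,3),(\E_7,3)\}$ one still has $\Ch(E)=\zz/p$. Here I would appeal to \cite[Thm.~4.6]{Ya11}, which computes (or bounds) the mod-$p$ Chow ring of the total space of such torsors in precisely these exceptional low-rank situations; the point is that in each of these cases the relevant ideal $I_p$ is generated in degrees that force $\Ch^*(G)\to\Ch^*(E)$ to kill everything positive-dimensional, so that $\Ch(E)=\zz/p$ regardless of how non-split the torsor is — this is the ``small parameter'' phenomenon. (In the generic-torsor language, these are exactly the groups for which $R_p(\xi)$ is a single Rost motive with the minimal Poincar\'e polynomial, cf. the motivic decomposition~\eqref{motdec} and Theorems~\ref{tg2}, \ref{tf4}, \ref{rn2}; this is where \cite{PSZ08} enters.) I would state this as the extraction of the computation from \cite{Ya11}, combined with the fact recorded in \cite{PSZ08} that for these $(\Phi,p)$ the only possible values of the $J$-invariant are the trivial one and the maximal one.

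Granting $\Ch(E)=\zz/p$ in both cases, the last assertion — maximality of the $J$-invariant — follows formally from the discussion just before Lemma~\ref{kern}: the hypothesis $\Ch^*(E)=\zz/p$ together with Lemma~\ref{tors} (right-exactness of $\Ch(BB)\to\Ch(E/B)\to\Ch(E)$, applied with $S=B$, noting $B$ is special and split reductive up to the unipotent radical, so $\Ch(BB)=\Ch(BT)$) forces $\varphi_E\colon\Ch(BB)\to\Ch(E/B)$ to be surjective; then the commutative diagram with $\res\colon\Ch(E/B)\to\Ch(G/B)$ shows $\Im\psi_E=\Im\varphi$, and Lemma~\ref{maxj} identifies this with maximality of the $J$-invariant. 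I would write this out in one or two sentences for each of the two bullets, or once in general since the argument is identical.

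The main obstacle is the second bullet: one must be careful that \cite[Thm.~4.6]{Ya11} really does cover \emph{all} non-trivial torsors (not just generic ones) and yields $\Ch(E)=\zz/p$ on the nose rather than merely up to higher torsion. The delicate point is that for a non-generic non-trivial torsor the $J$-invariant could a priori be intermediate; what rescues us is precisely that in these four low-rank cases $r=1$ and $k_1=1$ (see Table~\ref{jinv}), so the $J$-invariant has only the two values $0$ and $1$, and non-triviality of $E$ rules out $0$. Thus the heart of the argument is matching the Yagita computation against the two-element set of possible $J$-invariants; everything else is the routine diagram chase already set up above.
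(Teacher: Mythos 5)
Your proposal follows the paper's own route exactly: the paper deduces the lemma precisely from Lemma~\ref{lgen} (for standard generic torsors), \cite[Thm.~4.6]{Ya11} (for the four exceptional cases) and \cite{PSZ08}, with the maximality of the $J$-invariant obtained, just as you do, from the surjectivity of $\varphi_E$ via Lemma~\ref{tors}, the commutative diagram and Lemma~\ref{maxj}. There is no substantive difference; the delicate point you flag about whether Yagita's theorem covers all non-trivial torsors is a caveat about the lemma's statement itself and is not addressed beyond the citation in the paper either.
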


Assume that the $J$-invariant of $E$ mod $p$ takes its maximal possible value.
By \cite{PSZ08} and section~\ref{seclift} the following motivic decompositions hold:
$$\Ch(E/B)=\bigoplus\Ch(R_p(E))$$
$$\Ch(BB)=\Ch_G(G/B)=\bigoplus\Ch_G(R_{p,G}(G))$$
(we do not write the precise parameters for $\bigoplus$) and the equivariant motive $R_{p,G}(G)$
specializes to the ordinary motive $R_p(E)$.

Next we introduce a partial order on $\zz[[t]]$.
For two power series $A=\sum_{i\ge 0} a_it^i, B=\sum_{i\ge 0}b_it^i\in\zz[[t]]$ we write $A\preceq B$,
if $a_i\le b_i$ for all $i\ge 0$. For a power series $C=\sum_{i\ge 0}c_it^i\in\zz[[t]]$, we write $\trunc(C,n)$
for the truncation $\sum_{i=0}^nc_it^i$ up to degree $n$.

\begin{thm}\label{arbbase}
Let $G$ be a split semisimple linear algebraic group over a field $F$ and $P$  be a parabolic subgroup of $G$
over $F$. Let $$\Mot_G(G/P)=\bigoplus_i M_i$$ be a motivic decomposition of the equivariant motive of $G/P$,
where $M_i=(G/P,\pi_i)$, $\pi_i\in\CH_G(G/P\times G/P)$. 
Let $E$ be a $G$-torsor over a smooth scheme $Z$.

Then the above motivic decomposition of $G/P$ induces a motivic decomposition of the ordinary
motive $\Mot(E/P)$ over $Z$. Namely, $$\Mot(E/P)=\bigoplus_i\widetilde{M_i}.$$
If $M_i\simeq M_j\{l\}$ for some $i$, $j$, $l$, then $\widetilde{M_i}\simeq\widetilde{M_j}\{l\}$.
\end{thm}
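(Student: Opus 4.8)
The plan is to construct a specialization functor from the category of $G$-equivariant Chow motives over $F$ to the category of ordinary Chow motives over $Z$, induced by the associated-bundle construction $Y\mapsto E\times^G Y$, and then to invoke the formal fact that an additive functor respecting idempotents carries a motivic decomposition to a motivic decomposition. This is the mechanism already used in the proof of Theorem~\ref{t1} via the forgetful map $\rho$, which is the special case of the trivial $G$-torsor over $\Spec F$; here $\rho$ is replaced by an arbitrary $G$-torsor $E$ over the smooth base $Z$. (Note that, in this direction, $P$ need not be special.)

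First I would set up the geometry. For a quasi-projective $G$-variety $Y$ over $F$ the scheme $E\times^G Y$ is smooth projective over $Z$ whenever $Y$ is smooth projective over $F$; in particular $E\times^G(G/P)=E/P$, and, because $E\to Z$ is a $G$-torsor, $E\times^G(Y_1\times Y_2)=(E\times^G Y_1)\times_Z(E\times^G Y_2)$ for the diagonal action on a product. To obtain a map on Chow groups I would use the Edidin--Graham/Totaro approximation of $BG$ (see \cite{EG98}, \cite{To99}): choose $U$ open in a representation of $G$ on which $G$ acts freely and whose complement has large codimension, so that $\CH_G^j(Y)=\CH^j(Y\times^G U)$ in the relevant range of codimensions. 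Then $E\times^G(Y\times U)\cong(E\times^G Y)\times_Z(E\times^G U)$, the projection of the latter to $E\times^G Y$ is the restriction of a vector bundle to an open subscheme whose complement has large codimension (hence induces isomorphisms on Chow groups in the relevant range), while the map $E\times^G(Y\times U)\to Y\times^G U$ is a smooth fibre bundle with fibre $E$, in particular flat. Composing the flat pull-back with the inverse of the vector-bundle isomorphism gives, for $Y$ smooth projective over $F$, a graded ring homomorphism $\sigma_Y\colon\CH_G(Y)\to\CH(E\times^G Y)$, independent of the choices by the usual argument.

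Applying this to $Y=X_1\times X_2$ with $X_i=G/P$ (and to triple products, needed for composition) and using the product identity above, one gets homomorphisms $\CH_G(X_1\times X_2)\to\CH\big((E\times^G X_1)\times_Z(E\times^G X_2)\big)$. Since $\sigma$ is assembled from flat pull-backs and vector-bundle isomorphisms, it commutes with the operations out of which the composition of correspondences over $Z$ is built (proper push-forward, flat pull-back, intersection) and sends the equivariant diagonal of $X$ to the relative diagonal of $E\times^G X$ over $Z$; hence it is compatible with composition of correspondences in the category of Chow motives over $Z$ --- the same verification that underlies the use of $\rho$ in Theorem~\ref{t1}, now over a general base. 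Thus $Y\mapsto E\times^G Y$, $\pi\mapsto\sigma(\pi)$, is an additive functor from $G$-equivariant Chow motives to Chow motives over $Z$ commuting with Tate twists. Setting $\widetilde{\pi_i}:=\sigma(\pi_i)$, the relations $\pi_i\circ\pi_j=\delta_{ij}\pi_i$ and $\sum_i\pi_i=\Delta_{G/P}$ that encode $\Mot_G(G/P)=\bigoplus_i M_i$ pass to $\widetilde{\pi_i}\circ\widetilde{\pi_j}=\delta_{ij}\widetilde{\pi_i}$ and $\sum_i\widetilde{\pi_i}=\Delta_{E/P}$, so $\Mot(E/P)=\bigoplus_i\widetilde{M_i}$ with $\widetilde{M_i}=(E/P,\widetilde{\pi_i})$; and an isomorphism $M_i\simeq M_j\{l\}$, witnessed by mutually inverse equivariant correspondences of the appropriate shifted degrees, is carried by $\sigma$ (which preserves codimension and commutes with Tate twists) to $\widetilde{M_i}\simeq\widetilde{M_j}\{l\}$.

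The step I expect to be the main obstacle is the construction of $\sigma$ over a base scheme $Z$ that need not be a field, together with the check that it respects the composition of correspondences taken \emph{over $Z$}. This is handled entirely on the finite-dimensional approximations $U/G$, where $\sigma$ is a plain composite of a flat pull-back and the inverse of a vector-bundle pull-back, so that all the needed functorialities (the projection formula, the compatibility of intersection with flat and with proper morphisms, independence of the approximation) are the standard ones. Once $\sigma$ is available, the remainder is the purely formal statement that an additive, idempotent-preserving functor preserves motivic decompositions.
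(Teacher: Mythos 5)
Your proposal is correct, and its skeleton is the same as the paper's: produce a homomorphism from the $G$-equivariant correspondence ring $\CH_G(G/P\times G/P)$ to the correspondence ring $\CH(E/P\times_Z E/P)$ over $Z$ that is compatible with the composition product and sends the identity to the identity, and then transport the orthogonal idempotents and the isomorphisms $M_i\simeq M_j\{l\}$ formally. The difference is in how this homomorphism is built. The paper lets $\CH^*_G(G/P\times G/P)$ act on $\CH^*_G(G/P\times E\times_Z Z')\simeq\CH^*(E/P\times_Z Z')$ through the first factor and defines the map as $\alpha\mapsto\alpha_*(\Delta_{E/P})$ with $Z'=E/P$; here the target is an ordinary Chow group from the start (the $G$-action on $G/P\times E\times_Z Z'$ is free), and the only thing left to check is that evaluation of the action at the diagonal respects composition. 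You instead construct the twisting map $\sigma_Y\colon\CH_G(Y)\to\CH(E\times^G Y)$ explicitly on Edidin--Graham/Totaro approximations, as a flat pull-back along $E\times^G(Y\times U)\to Y\times^G U$ followed by the inverse of the high-codimension vector-bundle identification $(E\times^G Y)\times_Z(E\times^G U)\to E\times^G Y$, and then verify compatibility with push-forward, pull-back and intersection on triple products. The two maps agree (your $\sigma(\alpha)$ is exactly $\alpha_*(\Delta_{E/P})$), and both proofs leave the compatibility-with-composition check at the level of "standard functorialities"; the paper's formulation buys brevity and avoids juggling approximation spaces over $Z$, while yours is more explicit, makes the preservation of codimension and Tate twists transparent, and exhibits the construction as an additive functor $Y\mapsto E\times^G Y$ on equivariant motives, which is a slightly stronger and reusable statement. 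Your remark that $P$ need not be special in this direction matches the theorem's hypotheses.
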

\begin{proof}
Note that $\CH^*_G(G/P\times G/P)$ acts on $\CH^*_G(G/P\times E\times_Z Z')\simeq
\CH^*(E/P\times_Z Z')$ for any smooth scheme $Z'$ through the natural action on the first factor. In particular, taking $Z'=E/P$ we obtain a map
$$
\CH^*_G(G/P\times G/P)\to\CH^*(E/P\times_Z E/P)
$$
that sends an element $\alpha$ to $\alpha_*(\Delta_{E/P})$, where $*$ denotes the action described above
and $\Delta$ is the diagonal. One can check that this map is a ring homomorphism (with respect to the composition
product), and the claim follows.
\end{proof}

\begin{rem}
The same result (with the same proof) holds for motives with an arbitrary coefficient ring.
\end{rem}

Let $G$ be a split semisimple algebraic group of rank $l$ over a field $F$ and let $E$ be a $G$-torsor over a smooth
scheme $Z$.
Applying now Theorem~\ref{tlift} and Theorem~\ref{arbbase} we get a periodic decomposition $$\Mot(E/B)=\bigoplus_{i\ge 0} R_p(E)\{i\}^{\oplus c_i},$$
where $$\sum_{i\ge 0}c_it^i=\frac{1}{(1-t)^l}\prod_{i=1}^l(1-t^{d_{i,p}}).$$

\begin{thm}\label{ttor}
Let $G$ be a split semisimple algebraic group of rank $l$ over a field $F$ and let $E$ be a $G$-torsor over a smooth
scheme $Z$. Assume that $\Ch(E)=\zz/p$.
Then
$$P\big((\Ch(R_p(E))\cap\Ker\res),t\big)+1\preceq \prod_{i=1}^l\frac{1}{1-t^{d_{i,p}}}.$$

If $Z=\Spec F$, then additionally
$$P\big((\Ch(R_p(E))\cap\Ker\res),t\big)+1\preceq \trunc\big(\prod_{i=1}^l\frac{1}{1-t^{d_{i,p}}},\cdim_p(G)\big).$$

In particular, if $Z=\Spec F$, the group $\Ch(E/B)$ is a finitely generated abelian group.
\end{thm}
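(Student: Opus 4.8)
The plan is to exploit the periodic motivic decomposition $\Mot(E/B)=\bigoplus_{i\ge 0} R_p(E)\{i\}^{\oplus c_i}$ together with the structural information about $\Ch(R_p(E))$ coming from the equivariant picture. First I would translate the Poincar\'e-series inequality into a concrete bound on the part of $\Ch(R_p(E))$ that does \emph{not} survive the restriction map $\res\colon\Ch(E/B)\to\Ch(G/B)$. By Lemma~\ref{kern} the kernel of $\res$ is exactly $\varphi_E(I_p)$, and by Lemma~\ref{maxj} (since $\Ch(E)=\zz/p$ forces the $J$-invariant of $E$ to be maximal) the image of $\res$ equals the image of the characteristic map $\varphi$, which is a fixed combinatorial object. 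So the ``new'' classes in $\Ch(E/B)$ relative to $\Ch(G/B)$ are controlled by the ideal $I_p\subset\Ch(BB)=S(\widehat T)\otimes\zz/p$, whose generators $P_1,\dots,P_l$ have degrees $d_{1,p},\dots,d_{l,p}$ by Kac \cite{Kac85}. The upshot is that $\Ch(R_p(E))\cap\Ker\res$, as a graded group, is a quotient of the positive-degree part of the ideal generated by the $P_i$ inside a single copy of the equivariant Rost motive $R_{p,G}(G)$, whose Poincar\'e series is $\prod_{i=1}^l 1/(1-t^{d_{i,p}})$ by \eqref{f7}; hence the $+1$ on the left (accounting for degree $0$) is $\preceq$ that product.

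More precisely, the key step is to lift the equivariant decomposition of Theorem~\ref{tlift}, $\Ch(BB)=\Ch_G(G/B)=\bigoplus\Ch_G(R_{p,G}(G))$, along the base change to $Z$ via Theorem~\ref{arbbase}, so that each summand $R_p(E)\{i\}$ in $\Mot(E/B)$ specializes from a copy of $R_{p,G}(G)$. Diagram~\eqref{di} is compatible with these decompositions: $\varphi_E$ is surjective, $\res$ lands in $\Im\varphi$, and on each Rost summand the composite $\Ch_G(R_{p,G}(G))\to\Ch(R_p(E))\xrightarrow{\res}\Ch(G/B)$ has kernel precisely the image of $I_p$. Restricting attention to the degree-$0$ summand $R_p(E)$ (i.e.\ $i=0$), we get that $\Ch(R_p(E))$ is a quotient of $\Ch_G(R_{p,G}(G))=S(\widehat T)\otimes\zz/p$ modulo something, and $\Ch(R_p(E))\cap\Ker\res$ is the image of the augmentation ideal $I_p^+$ of $I_p$ under the surjection $\varphi_E$ restricted to that summand. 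Since a surjective graded map can only decrease Poincar\'e series coefficients, $P(\Ch(R_p(E))\cap\Ker\res,t)\preceq P(I_p^+\text{ inside }S(\widehat T)\otimes\zz/p\text{ as a }\zz/p[P_1,\dots,P_l]\text{-ideal piece},t)$; combined with the degree-$0$ term this yields exactly $P(\Ch(R_p(E))\cap\Ker\res,t)+1\preceq\prod_{i=1}^l 1/(1-t^{d_{i,p}})$.

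For the sharper statement over $Z=\Spec F$, I would add the input that $\Ch(E)=\zz/p$ (Lemma~\ref{lgen} type) forces $f_E\colon\Ch(E/B)\to\Ch(E)=\zz/p$ to be the augmentation, so $\Ker f_E=\varphi_E(\Ch(BB)^+)$ is \emph{all} of the positive-degree part; meanwhile $\Ch(E/B)$ is a finitely generated abelian group because $E/B$ is a smooth projective variety over a field, hence has finite-dimensional Chow groups in each codimension and vanishing Chow groups above $\dim E/B$. The vanishing above the canonical $p$-dimension comes from the two formulas for $\cdim_p(G)$ recalled at the start of Section~\ref{sec7}: the Rost motive $R_p(E)$ has $\Ch$ concentrated in codimensions $\le\cdim_p(G)=\sum(e_i-d_{i,p})$, and by \cite{KM02},\cite{KM13} etc.\ its top nonvanishing piece is exactly in that degree, so truncating the periodic product $\prod 1/(1-t^{d_{i,p}})$ at $\cdim_p(G)$ loses nothing relevant. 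The finite generation of $\Ch(E/B)$ then follows since it is a finite direct sum of copies of the finitely generated group $\Ch(R_p(E))$ (finitely many $c_i$ are relevant up to the finite-dimensionality of $\Ch(E/B)$), or more directly because $E/B$ is smooth projective over a field.

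The main obstacle I anticipate is making the comparison ``$\Ch(R_p(E))$ is a quotient of $\Ch_G(R_{p,G}(G))$ and $\Ker\res$ corresponds to $\varphi_E(I_p)$'' fully precise at the level of the \emph{individual} Rost summand rather than just the whole of $\Ch(E/B)$: one must check that the specialization map of Theorem~\ref{arbbase} is compatible with the projectors $\pi_i$ and with the restriction maps in diagram~\eqref{di}, so that the identification $\Ker\res=\varphi_E(I_p)$ of Lemma~\ref{kern} descends to $\Ker\big(\res|_{\Ch(R_p(E))}\big)=\varphi_E(I_p)\cap\Ch(R_p(E))$ and matches the augmentation ideal of the Kac generators inside one copy of the polynomial algebra. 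Once that bookkeeping is in place, the inequality and the truncation are formal consequences of surjectivity decreasing Poincar\'e coefficients and of the dimension bound $\cdim_p(G)$.
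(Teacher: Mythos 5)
Your treatment of the first inequality is correct in spirit but follows a genuinely different route from the paper's. You argue per summand: since $\varphi_E\colon\Ch(BB)\to\Ch(E/B)$ is surjective and compatible with the lifted projectors (the bookkeeping you flag; it does work, because the pullback along $G/B\times E\to G/B$ commutes with the action of equivariant correspondences on the first factor), $\Ch(R_p(E))$ is a quotient of the equivariant summand $\Ch_G(R_{p,G}(G))$, whose Poincar\'e series is $\prod_{i}1/(1-t^{d_{i,p}})$ by \eqref{f7}; combined with the vanishing of the degree-zero part of $\Ch(R_p(E))\cap\Ker\res$ (the fundamental class restricts to the fundamental class) this gives the claim, and in fact the stronger bound $P(\Ch(R_p(E)),t)\preceq\prod_i 1/(1-t^{d_{i,p}})$. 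The paper argues globally instead: it uses the exact equality $P(\Ker\res,t)=P\big(\Ch(R_p(E))\cap\Ker\res,t\big)\cdot\sum_ic_it^i$ coming from the periodic decomposition, bounds $P(\Ker\res,t)\preceq P(I_p,t)$ via Lemma~\ref{kern} together with $P(\Ch(BB)/I_p,t)\succeq\frac{1}{(1-t)^l}\prod_i(1-t^{d_{i,p}})$ (the ideal $I_p$ has $l$ generators of degrees $d_{i,p}$), and then divides Poincar\'e series. Your route trades that division for the projector-compatibility check. Two slips in your write-up: $\Ch_G(R_{p,G}(G))$ is a direct summand of $S(\widehat T)\otimes\zz/p$, not equal to it (though you use the correct series), and your identification of $\Ch(R_p(E))\cap\Ker\res$ with the image of $I_p$ ``restricted to that summand'' would require $I_p$ to be compatible with the motivic decomposition, which is neither shown nor needed — the containment $\Ch(R_p(E))\cap\Ker\res\subset\Ch(R_p(E))$ already suffices.

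The second half of your proposal has genuine gaps. For the truncated bound you need $\Ch^i(R_p(E))=0$ for $i>\cdim_p(G)$ when $Z=\Spec F$, but you only assert this, citing \cite{KM02}, \cite{KM13}, which compute Chow groups of symbol Rost motives in special cases, not the generalized motive $R_p(\xi)$ for an arbitrary split semisimple $G$. A correct general argument is available and should be given: over a field the multiplicity polynomial $\sum_ic_it^i$ has degree $N-\cdim_p(G)$ (where $N$ is the number of positive roots) with top coefficient $1$, so $\Mot(E/B)\otimes\zz/p$ contains the summand $R_p(E)\{N-\cdim_p(G)\}$, and since $\Ch^j(E/B)=0$ for $j>N$ the vanishing follows. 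More seriously, your justification of the finite generation of $\Ch(E/B)$ — ``because $E/B$ is smooth projective over a field, hence has finite-dimensional Chow groups in each codimension'' — is false: Chow groups of smooth projective varieties over a field need not be finitely generated, and the paper's own remark immediately after the theorem (the Karpenko--Merkurjev $5$-dimensional quadrics of \cite{KM91}) exhibits twisted flag varieties with infinitely generated Chow groups; the whole point of this conclusion is that finiteness is \emph{not} automatic. It must be extracted from what was just proved: in each codimension $\Ch^i(R_p(E))\cap\Ker\res$ is finite by the bound, the image of $\res$ lies in the finite group $\Ch(G/B)$, only finitely many codimensions and finitely many shifts contribute over a field — equivalently, surjectivity of $\varphi_E$ together with the bound on codimension already yields finiteness. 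Your alternative clause (``finite direct sum of copies of the finitely generated group $\Ch(R_p(E))$'') presupposes exactly the finiteness being proved, so as written this part of the argument does not stand.
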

\begin{proof}
We have a periodic decomposition $$\Mot(E/B)=\bigoplus_{i\ge 0} R_p(E)\{i\}^{\oplus c_i},$$
where $$\sum_{i\ge 0}c_it^i=\frac{1}{(1-t)^l}\prod_{i=1}^l(1-t^{d_{i,p}}).$$ Therefore taking realizations
we have $$\Ker\res=\bigoplus\big(\Ch(R_p(E))\cap\Ker\res\big)$$ and passing to the Poincar\'e polynomials
$$P(\Ker\res,t)=P\big((\Ch(R_p(E))\cap\Ker\res),t\big)\cdot\big(\sum_{i\ge 0}c_it^i\big).$$

Thus, $P\big((\Ch(R_p(E))\cap\Ker\res),t\big)=\dfrac{P(\Ker\res,t)}{\frac{1}{(1-t)^l}\prod_{i=1}^l(1-t^{d_{i,p}})}$.
By Lemma~\ref{kern} $$P(\Ker\res,t)\preceq P(I_p,t)=P(\Ch(BB),t)-P(\Ch(BB)/I_p).$$
The ideal $I_p$ is generated by the homogeneous polynomials $P_1,\ldots,P_l$ of degrees $d_{1,p},\ldots,d_{l,p}$. Therefore
$$P(\Ch(BB)/I_p)\succeq P(\Ch(BB),t)\prod_{i=1}^l(1-t^{d_{i,p}})=\frac{1}{(1-t)^l}\prod_{i=1}^l(1-t^{d_{i,p}}).$$
Therefore $$P\big((\Ch(R_p(E))\cap\Ker\res),t\big)\preceq\frac{\frac{1}{(1-t)^l}-\frac{1}{(1-t)^l}\prod_{i=1}^l(1-t^{d_{i,p}})}
{\frac{1}{(1-t)^l}\prod_{i=1}^l(1-t^{d_{i,p}})}$$
and the result follows.
\end{proof}

There is a similar method to find a lower bound of the Poincar\'e polynomials of
ordinary motives, cf. Section~\ref{rostbase}.
In cases when $(G,p)$ equal $(\G_2,2)$ and $(\F_4,3)$ this gives an explanation of the mysterious coincidences of Example~\ref{ex1}.
To complete the picture we notice that when $Z$ is a field, we have $\res(\Ch(R_p(E)))=\zz/p$ and $\res(\Ch(E/B))=(\zz/p)^M$ (without grading),
where $M=\prod_{i=1}^l d_{i,p}$. Notice also that $\prod_{i=1}^l e_i=|W|$, the order of the Weyl group of $G$.

\begin{rem}
There exist examples of twisted flag varieties over fields (certain $5$-dimensional quadrics) with {\it infinitely} generated Chow groups, see \cite{KM91}.
\end{rem}

\section{Lower bound on torsion of Rost motives over a general base}\label{rostbase}

Consider the sequence of embeddings $\SL_3\to\G_2\to\GL_7$, where $\SL_3$ is embedded in $\G_2$ as a subsystem
subgroup and $\G_2$ is embedded in $\GL_7$ via its standard representation.

Let $X$ be a smooth scheme over the base field $F$ and $E$ be a $\G_2$-torsor over $X$.
By Lemma~\ref{tors}
the sequence $$\CH^*(B\SL_3)\to\CH^*(E/\SL_3)\to\CH^*(E)$$ is a right exact sequence of graded rings.

By \cite[Prop.~14.2]{To99} the sequence
$$\CH^*(B\GL_7)\xrightarrow{f}\CH^*(B\SL_3)\to\CH^*(\GL_7/\SL_3)$$
is also a right exact sequence of graded rings.

Moreover, by \cite[Section~16]{To99} the map $\CH^*(B\GL_7)\to\CH^*(B\G_2)$ induced by the standard representation
of $\G_2$ is surjective.

The image of $\CH^*(B\G_2)\to\CH^*(B\SL_3)$ equals $\zz[2x_2,x_2^2,x_3^2]$, where as above we identify $\CH^*(B\SL_3)$
and $\zz[x_2,x_3]$, where $x_i$'s have degree $i$. Indeed, by \cite[Remark~7.1]{Gui07} the image of $\CH^*(B\G_2)\to\CH^*(B\SL_3)$ equals the
image of the composite map $$\CH^*(B\Spin_7)\to\CH^*(B\G_2)\to\CH^*(B\SL_3)$$ and the latter image
was computed in \cite[Proof of Prop.~9.1]{Gui07}.

Take now $E=\GL_7$ and $X=\GL_7/\G_2$, so $E\to X$ is a $\G_2$-torsor and $\CH^*(E)=\zz$.
The element $x_2x_3$ does not lie in the ideal generated
by the elements of positive degree in the image of $f$.
Therefore this element maps to a non-zero element in $\CH^5(\GL_7/\SL_3)$.
This element is a torsion element, since $2x_2$ belongs to the image of $f$.

Notice that by the above considerations $B\SL_3$ is an equivariant Rost motive (for a $3$-symbol modulo $2$),
and $E/\SL_3=\GL_7/\SL_3$ is an analog of the ordinary Rost motive over $X=\GL_7/\G_2$.

The same analysis shows that $$\CH^2(\GL_7/\SL_3)=\CH^5(\GL_7/\SL_3)=\zz/2\zz,$$
$$\CH^0(\GL_7/\SL_3)=\CH^3(\GL_7/\SL_3)=\zz,$$
and all other Chow groups of $\GL_7/\SL_3$ are equal to $0$.

Thus, we have shown the following proposition about a lower bound on torsion (see Theorem~\ref{ttor} for an upper bound).
\begin{prop}
There exists a $\G_2$-torsor $E$ over certain base scheme $X$ such that
for $R=E/\SL_3$ one has $\CH^i(R)=\begin{cases}
\zz, \text{ if }i=0,3;\\
\zz/2, \text{ if }i=2,5;\\
0, \text{ otherwise.}
\end{cases}$
\end{prop}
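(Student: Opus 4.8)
The strategy is to take the specific torsor $E=\GL_7$ over $X=\GL_7/\G_2$ constructed just above the statement and to compute all Chow groups of $R=\GL_7/\SL_3=E/\SL_3$ directly from the two right exact sequences already established. First I would write down the right exact sequence from Lemma~\ref{tors}, namely
\[
\CH^*(B\SL_3)\to\CH^*(\GL_7/\SL_3)\to\CH^*(\GL_7)=\zz,
\]
which exhibits $\CH^*(R)$ as the quotient of $\zz[x_2,x_3]$ (with $\deg x_i=i$) by the ideal generated by the positive-degree part of the image of $f\colon\CH^*(B\GL_7)\to\CH^*(B\SL_3)$, after further quotienting so that the augmentation to $\CH^*(\GL_7)$ is respected. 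Combining with Totaro's right exact sequence $\CH^*(B\GL_7)\xrightarrow{f}\CH^*(B\SL_3)\to\CH^*(\GL_7/\SL_3)$ and the fact that $\CH^*(B\GL_7)\to\CH^*(B\G_2)$ is surjective, the relevant ideal is the one generated by the positive-degree elements of $\Im(\CH^*(B\G_2)\to\CH^*(B\SL_3))=\zz[2x_2,\,x_2^2,\,x_3^2]$.

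So the computation reduces to describing the graded ring $A:=\zz[x_2,x_3]\big/(2x_2,\,x_2^2,\,x_3^2)$ degree by degree. In degree $0$ one gets $\zz$ (generated by $1$); in degree $2$ the class $x_2$ survives but is killed by $2$, so $\zz/2$; in degree $3$ the class $x_3$ survives freely (there is no relation of degree $3$ in the ideal, since the generators have degrees $2,4,6$), giving $\zz$; in degree $4$ we have $x_2^2=0$ and no other monomial, so $0$; in degree $5$ the only monomial is $x_2x_3$, which is nonzero (it is not in the ideal generated by $2x_2,x_2^2,x_3^2$) but is $2$-torsion because $2x_2x_3=(2x_2)x_3\in(2x_2)$, giving $\zz/2$; in degrees $6,7,\dots$ every monomial $x_2^ax_3^b$ with $2a+3b\ge 6$ is divisible by $x_2^2$ or $x_3^2$ or equals $x_3^2$ times a monomial, hence vanishes, so all higher groups are $0$. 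This yields exactly the stated answer: $\CH^i(R)=\zz$ for $i=0,3$, $\CH^i(R)=\zz/2$ for $i=2,5$, and $0$ otherwise.

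The one point that needs genuine care — and which I expect to be the main obstacle — is the bookkeeping of \emph{which} ideal one is dividing by: the sequence from Lemma~\ref{tors} involves $\CH^*(E)=\CH^*(\GL_7)=\zz$ rather than $\CH^*(\pt)$, and Totaro's sequence involves $\CH^*(\GL_7/\SL_3)$, so one must check that these two descriptions of $\CH^*(R)$ are compatible and that the image of $\CH^*(B\GL_7)$ in $\CH^*(B\SL_3)$ genuinely factors (after restricting attention to positive degrees) through the image of $\CH^*(B\G_2)$. This is exactly what the discussion preceding the proposition supplies, via \cite[Prop.~14.2]{To99}, \cite[Section~16]{To99}, and \cite[Remark~7.1, Proof of Prop.~9.1]{Gui07}. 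Once these identifications are in hand the remaining work is the elementary graded-ring computation sketched above, together with the already-noted observations that $x_2x_3\notin(2x_2,x_2^2,x_3^2)$ (so $\CH^5(R)\ne 0$) and $2x_2x_3\in(2x_2)$ (so $\CH^5(R)$ is $2$-torsion). Thus the proof is essentially a careful assembly of the right exact sequences plus a monomial-by-monomial inspection.
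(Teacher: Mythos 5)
Your proposal is correct and follows essentially the same route as the paper: the right exact sequences from Lemma~\ref{tors} and Totaro's Prop.~14.2, the surjectivity of $\CH^*(B\GL_7)\to\CH^*(B\G_2)$ together with Guillot's computation of the image $\zz[2x_2,x_2^2,x_3^2]$, and then the degree-by-degree inspection of $\zz[x_2,x_3]/(2x_2,x_2^2,x_3^2)$, which is exactly the ``same analysis'' the paper leaves implicit.
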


Recall that for an ordinary Rost motive $R'$ of type $(3,2)$ over a {\it field} one has the same formula with the only difference
that $\CH^5(R')=0$, and by Theorem~\ref{tg2} $R'\simeq E/\SL_3$ for a $\G_2$-torsor $E$ over a field.
So, by analogy the motive $R$ from the statement of the above proposition can be viewed as a Rost
motive over the base $X=\GL_7/\G_2$, and it illustrates a new phenomenon which is not visible for
Rost motives over fields.

\end{document}